\newcommand*{\mailto}[1]{\href{mailto:#1}{\nolinkurl{#1}}}
\newcommand{\arxiv}[1]{\href{http://arxiv.org/abs/#1}{arXiv:#1}}
\newcommand{\bbC}{{\mathbb{C}}}
\newcommand{\bbN}{{\mathbb{N}}}
\newcommand{\bbR}{{\mathbb{R}}}
\newcommand{\bsH}{{\boldsymbol{H}}}
\newcommand{\bsI}{{\boldsymbol{I}}}
\newcommand{\bsJ}{{\boldsymbol{J}}}
\newcommand{\bsK}{{\boldsymbol{K}}}
\newcommand{\bsL}{{\boldsymbol{L}}}
\newcommand{\bsS}{{\boldsymbol{S}}}
\newcommand{\bsT}{{\boldsymbol{T}}}
\newcommand{\bsU}{{\boldsymbol{U}}}
\newcommand{\bsV}{{\boldsymbol{V}}}
\newcommand{\bsu}{{\boldsymbol{u}}}
\newcommand{\bsv}{{\boldsymbol{v}}}
\newcommand{\cB}{{\mathcal B}}
\newcommand{\cF}{{\mathcal F}}
\newcommand{\cH}{{\mathcal H}}
\newcommand{\cJ}{{\mathcal J}}
\newcommand{\cK}{{\mathcal K}}
\DeclareMathOperator{\supp}{supp}
\DeclareMathOperator{\dom}{dom}
\DeclareMathOperator{\tr}{tr}
\DeclareMathOperator{\spr}{spr}
\DeclareMathOperator*{\nlim}{n-lim}
\renewcommand{\Im}{\text{\rm Im}}
\newcommand{\loc}{\text{\rm{loc}}}
\newcommand{\no}{\notag}
\newcommand{\lb}{\label}
\newcommand{\f}{\frac}
\newcommand{\ol}{\overline}
\newcommand{\hatt}{\widehat} 
\newcommand{\bi}{\bibitem}
\renewcommand{\ge}{\geqslant}
\let\geq\geqslant
\let\leq\leqslant
\def\theequation{\@arabic\c@equation}
\numberwithin{equation}{section}
\newtheorem{theorem}{Theorem}[section]
\newtheorem{lemma}[theorem]{Lemma}
\newtheorem{corollary}[theorem]{Corollary}
\newtheorem{hypothesis}[theorem]{Hypothesis}
\theoremstyle{remark}
\newtheorem{remark}[theorem]{Remark}
\begin{document}

\numberwithin{equation}{section}
\allowdisplaybreaks

\title[Reduction of Fredholm Determinants for Semi-Separable Operators]{A Jost--Pais-Type Reduction of (Modified) Fredholm Determinants for Semi-Separable Operators in Infinite Dimensions}

\author[F.\ Gesztesy]{Fritz Gesztesy}  
\address{Department of Mathematics,
University of Missouri, Columbia, MO 65211, USA}
\email{\mailto{gesztesyf@missouri.edu}}
\urladdr{\url{http://www.math.missouri.edu/personnel/faculty/gesztesyf.html}}

\author[R.\ Nichols]{Roger Nichols}  
\address{Mathematics Department, The University of Tennessee at Chattanooga, 
415 EMCS Building, Dept. 6956, 615 McCallie Ave, Chattanooga, TN 37403, USA}
\email{\mailto{Roger-Nichols@utc.edu}}
\urladdr{\url{http://www.utc.edu/faculty/roger-nichols/index.php}} 

\dedicatory{Dedicated with great pleasure to Lev Aronovich Sakhnovich on the occasion of 
his 80th birthday.}

\thanks{To appear in {\it Recent Advances in Schur Analysis and Stochastic Processes - A 
Collection of Papers Dedicated to Lev Sakhnovich}, D.\ Alpay and B.\ Kirstein (eds.), Operator Theory: 
Advances and Applications, Birkh\"auser, Basel.}

\date{\today}
\subjclass[2010]{Primary: 47B10, 47G10, Secondary: 34B27, 34L40.}
\keywords{Modified Fredholm determinants, semi-separable kernels, Jost functions.}

\begin{abstract} 
We study the analog of semi-separable integral kernels in $\cH$ of the type 
\begin{equation*}
K(x,x')=\begin{cases} F_1(x)G_1(x'), & a<x'< x< b, \\ 
F_2(x)G_2(x'), & a<x<x'<b,  
\end{cases}   
\end{equation*}
where $-\infty\leq a<b\leq \infty$, and for a.e.\ $x \in (a,b)$, $F_j (x) \in \cB_2(\cH_j,\cH)$ and 
$G_j(x) \in \cB_2(\cH,\cH_j)$ 
such that $F_j(\cdot)$ and $G_j(\cdot)$ are uniformly measurable, and 
\begin{equation*}  
\|F_j( \cdot)\|_{\cB_2(\cH_j,\cH)} \in L^2((a,b)), \; 
\|G_j (\cdot)\|_{\cB_2(\cH,\cH_j)} \in L^2((a,b)), \quad j=1,2,   
\end{equation*}
with $\cH$ and $\cH_j$, $j=1,2$, complex, separable Hilbert spaces. Assuming 
that $K(\cdot, \cdot)$ generates a Hilbert--Schmidt operator $\bsK$ in 
$L^2((a,b);\cH)$, we derive the analog of the Jost--Pais reduction theory that 
succeeds in proving that the modified Fredholm determinant 
${\det}_{2, L^2((a,b);\cH)}(\bsI - \alpha \bsK)$, $\alpha \in \bbC$, naturally reduces 
to appropriate Fredholm determinants in the Hilbert spaces $\cH$ (and 
$\cH \oplus \cH$). 

Some applications to Schr\"odinger operators with operator-valued potentials are provided. 
\end{abstract}

\maketitle


\section{Introduction}  \lb{s1}

Lev A.\ Sakhnovich's contributions to analysis in general are legendary, including, in particular, 
fundamental results in interpolation theory, spectral and inverse spectral theory, canonical systems, 
integrable systems and nonlinear evolution equations, integral equations, stochastic processes, 
applications to statistical physics, and the list goes on and on (see, e.g., \cite{SSR13}--\cite{Sa12}, 
and the literature cited therein). Since integral operators frequently play a role in his research interests, 
we hope our modest contribution to semi-separable operators in infinite dimensions will create some joy 
for him.

The principal topic in this paper concerns semi-separable integral operators and 
their associated Fredholm determinants. In a nutshell, suppose that $\cH$ 
and $\cH_j$, $j=1,2$, are complex, separable Hilbert spaces, that 
$-\infty\leq a<b\leq \infty$, and introduce the semi-separable integral kernel
in $\cH$, 
\begin{equation*}
K(x,x')=\begin{cases} F_1(x)G_1(x'), & a<x'< x< b, \\ 
F_2(x)G_2(x'), & a<x<x'<b,  
\end{cases}   
\end{equation*}
where for a.e.\ $x \in (a,b)$, $F_j (x) \in \cB_2(\cH_j,\cH)$ and $G_j(x) \in \cB_2(\cH,\cH_j)$ 
such that $F_j(\cdot)$ and $G_j(\cdot)$ are uniformly measurable (i.e., measurable with 
respect to the uniform operator topology), and 
\begin{equation*}  
\|F_j( \cdot)\|_{\cB_2(\cH_j,\cH)} \in L^2((a,b)), \; 
\|G_j (\cdot)\|_{\cB_2(\cH,\cH_j)} \in L^2((a,b)), \quad j=1,2.    
\end{equation*}
Assuming that $K(\cdot, \cdot)$ generates a Hilbert--Schmidt operator $\bsK$ in 
$L^2((a,b);\cH)$, we derive the analog of the Jost--Pais reduction theory that 
naturally reduces the modified Fredholm determinant 
${\det}_{2, L^2((a,b);\cH)}(\bsI - \alpha \bsK)$, $\alpha \in \bbC$, to appropriate 
Fredholm determinants in the Hilbert spaces $\cH$ (and $\cH \oplus \cH$) as 
described in detail in Theorem \ref{tA.12} and Corollary \ref{cA.13}. For instance, we will 
prove the following remarkable abstract version of the Jost--Pais-type reduction of modified 
Fredholm determinants \cite{JP51} (see also \cite{Ge86}, \cite{GM03}, \cite{Ne80}, \cite{Si00}), 
\begin{align}
&{\det}_{2,L^2((a,b);\cH)}(\bsI - \alpha \bsK)    \no\\[1mm] 
\begin{split} 
&\quad = {\det}_{\cH_1}\bigg(I_{\cH_1} - \alpha \int_a^bdx\, G_1(x)\widehat F_1(x,\alpha)\bigg)
\exp\bigg(\alpha \int_a^bdx\, \tr_{\cH}(F_1(x)G_1(x))\bigg)     \\[1mm] 
&\quad = {\det}_{\cH_2}\bigg(I_{\cH_2} - \alpha \int_a^bdx\, G_2(x)\widehat F_2(x,\alpha)\bigg)
\exp\bigg(\alpha \int_a^bdx\, \tr_{\cH}(F_2(x)G_2(x))\bigg),    \lb{1.1}    
 \end{split} 
\end{align}
in Theorem \ref{tA.12}, where $\widehat F_1(\cdot; \alpha )$ and $\widehat F_2(\cdot; \alpha )$
are defined via the Volterra integral equations
\begin{align}
\widehat F_1(x; \alpha )&=F_1(x)-\alpha \int_x^b dx'\, H(x,x')\widehat F_1(x'; \alpha ), \lb{1.35} \\ 
\widehat F_2(x; \alpha )&=F_2(x)+\alpha \int_a^x dx'\, H(x,x')\widehat F_2(x'; \alpha ). \lb{1.36}  
\end{align}  
The analog of \eqref{1.1} in the case where $\bsK$ is a trace class operator in $L^2((a,b);\cH)$
was recently derived in \cite{CGPST13} (cf.\ Corollary \ref{cA.13}).  

Section \ref{s2} focuses on our abstract results on semi-separable operators in infinite dimensions 
and represents the bulk of this paper. In particular, we will derive \eqref{1.1} and additional variants 
of it in Theorem \ref{tA.12}, the principal new result of this paper. Section \ref{s3} then presents some 
applications to Schr\"odinger operators with operator-valued potentials on $\bbR$ and $(0,\infty)$.

\section{Semiseparable Operators and Reduction Theory for Fredholm 
Determinants} \lb{s2}

In this section we describe one of the basic tools in this paper: a reduction 
theory for (modified) Fredholm determinants that permits one to reduce (modified) Fredholm 
determinants in the Hilbert space $L^2((a,b);\cH)$ to those in the Hilbert 
space $\cH$, as described in detail in Theorem \ref{tA.12} and in Corollary \ref{cA.13}. 
More precisely, we focus on a particular set of Hilbert--Schmidt 
operators $\bsK$ in $L^2((a,b);\cH)$ with $\cB(\cH)$-valued semi-separable 
integral kernels (with $\cH$ a complex, separable Hilbert space, generally of 
infinite dimension) and show how to naturally reduce the Fredholm 
determinant ${\det}_{2, L^2((a,b);\cH)}(\bsI - \alpha \bsK)$, $\alpha \in \bbC$, 
to appropriate Fredholm determinants in Hilbert spaces $\cH$ and 
$\cH \oplus \cH$ (in fact, we will describe a slightly more general framework 
below).   

In our treatment we closely follow the approaches presented in Gohberg,
Goldberg, and Kaashoek \cite[Ch.\ IX]{GGK90} and Gohberg, Goldberg, and
Krupnik \cite[Ch.\ XIII]{GGK00} (see also \cite{GK84}), and especially, in 
\cite{GM03}, where the particular case $\dim(\cH) < \infty$ was treated in 
detail. Our treatment of the case $\dim(\cH) = \infty$ in this section closely 
follows the one in \cite{CGPST13} in the case where $\bsK$ is a trace 
class operator in $L^2((a,b);\cH)$. 

Next, we briefly summarize some of the notation used in this section: $\cH$ and $\cK$ 
denote separable, complex Hilbert spaces, $(\cdot,\cdot)_{\cH}$ represents the scalar product in $\cH$ 
(linear in the second argument), and $I_{\cH}$ is the identity operator in $\cH$.

If $T$ is a linear operator mapping (a subspace of) a Hilbert space into 
another, then $\dom(T)$ and $\ker(T)$ denote the domain and kernel (i.e., 
null space) of $T$. The closure of a closable operator $S$ is denoted by $\ol S$. 
The spectrum, essential spectrum, and resolvent set of a closed linear operator in a Hilbert space 
will be denoted by $\sigma(\cdot)$, $\sigma_{ess}(\cdot)$, and $\rho(\cdot)$, respectively.

The Banach spaces of bounded and compact linear operators between complex, separable Hilbert spaces $\cH$ and $\cK$ are denoted by $\cB(\cH,\cK)$ and $\cB_\infty(\cH,\cK)$, 
respectively, and the corresponding $\ell^p$-based trace ideals will be denoted by 
$\cB_p(\cH,\cK)$, $p>0$.  When $\cH=\cK$, we simply write $\cB(\cH)$, $\cB_{\infty}(\cH)$ 
and $\cB_p(\cH)$, $p>0$, respectively.  The spectral radius of $T\in \cB(\cH,\cK)$ is denoted 
by $\spr(T)$.  Moreover, ${\det}_{\cH}(I_\cH-A)$, and $\tr_{\cH}(A)$ denote the standard 
Fredholm determinant and the corresponding trace 
of a trace class operator $A\in\cB_1(\cH)$. Modified Fredholm determinants are denoted by 
${\det}_{k, \cH}(I_\cH-A)$, $A\in\cB_k(\cH)$, $k \in \bbN$, $k \geq 2$.

For reasons of brevity, for operator-valued functions that are measurable with 
respect to the uniform operator topology, we typically use the short cut uniformly measurable.  

Before setting up the basic formalism for this section, we state the following 
elementary result: 

\begin{lemma} \lb{lA.1} 
Let $\cH$ and $\cH'$ be complex, separable Hilbert spaces and and $-\infty\leq a<b\leq \infty$. 
Suppose that for a.e.\ $x \in (a,b)$, $F (x) \in \cB(\cH',\cH)$ and $G(x) \in \cB(\cH,\cH')$ 
with $F(\cdot)$ and $G(\cdot)$ uniformly measurable, and  
\begin{equation}  
\|F( \cdot)\|_{\cB(\cH',\cH)} \in L^2((a,b)), \; 
\|G (\cdot)\|_{\cB(\cH,\cH')} \in L^2((a,b)). \lb{A.5}
\end{equation}
Consider the integral operator $\bsS$ in $L^2((a,b);\cH)$ with $\cB(\cH)$-valued 
separable integral kernel of the type 
\begin{equation}
S(x,x') = F(x) G(x') \, \text{ for a.e.\ $x, x' \in (a,b)$.}      \lb{A.6a} 
\end{equation}
Then 
\begin{equation}
\bsS \in \cB\big(L^2((a,b);\cH)\big).   \lb{A.7a}
\end{equation}
\end{lemma}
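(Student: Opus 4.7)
The plan is to write $\bsS$ explicitly in the natural ``rank-one'' form afforded by the separable kernel and then estimate it via Cauchy--Schwarz. Concretely, for any $f \in L^2((a,b);\cH)$ I would first define, at least formally,
\begin{equation*}
v(f) := \int_a^b dx'\, G(x') f(x') \in \cH',
\end{equation*}
and then show $(\bsS f)(x) = F(x)\, v(f)$ for a.e.\ $x \in (a,b)$. The key pointwise estimate is the obvious operator-norm bound $\|G(x')f(x')\|_{\cH'} \le \|G(x')\|_{\cB(\cH,\cH')}\|f(x')\|_{\cH}$, which together with Cauchy--Schwarz gives
\begin{equation*}
\|v(f)\|_{\cH'} \le \|G(\cdot)\|_{\cB(\cH,\cH')} \big\|_{L^2((a,b))}\,\|f\|_{L^2((a,b);\cH)}.
\end{equation*}
Applying $F(x)$ and using the pointwise bound $\|F(x)v(f)\|_{\cH} \le \|F(x)\|_{\cB(\cH',\cH)}\|v(f)\|_{\cH'}$, followed by squaring and integrating in $x$, yields
\begin{equation*}
\|\bsS f\|_{L^2((a,b);\cH)} \le \|F(\cdot)\|_{\cB(\cH',\cH)}\big\|_{L^2((a,b))}\,\|G(\cdot)\|_{\cB(\cH,\cH')}\big\|_{L^2((a,b))}\,\|f\|_{L^2((a,b);\cH)},
\end{equation*}
which is \eqref{A.7a} with an explicit norm bound.

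The only subtle point, and what I expect to be the main obstacle, is justifying the existence of $v(f)$ as a Bochner integral and the measurability of $x \mapsto F(x)v(f)$. For this I would invoke the fact that $\cH$ and $\cH'$ are separable and that $G(\cdot)$ is uniformly measurable: then for each fixed $f \in L^2((a,b);\cH)$, the map $x' \mapsto G(x')f(x')$ is strongly measurable as a composition of a uniformly measurable operator-valued function with a strongly measurable vector-valued function (this standard fact follows, e.g., from Pettis's measurability theorem applied to scalar-valued functions of the form $x' \mapsto (h, G(x')f(x'))_{\cH'}$, using separability of $\cH'$). The $L^1$ bound $\int_a^b \|G(x')f(x')\|_{\cH'}\, dx' < \infty$ already established then guarantees Bochner integrability of $G(\cdot)f(\cdot)$, so $v(f)$ is a well-defined element of $\cH'$. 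The same reasoning applies to $x \mapsto F(x)v(f)$.

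With measurability and integrability in hand, Fubini's theorem (applied to the scalar pairing with a test vector in $\cH$ if one prefers to avoid vector-valued Fubini) justifies the identification of $\bsS$ with the integral operator acting via the kernel $S(x,x') = F(x)G(x')$, and the chain of inequalities above delivers both boundedness of $\bsS$ on $L^2((a,b);\cH)$ and the estimate $\|\bsS\|_{\cB(L^2((a,b);\cH))} \le \|F(\cdot)\|_{\cB(\cH',\cH)}\big\|_{L^2((a,b))}\|G(\cdot)\|_{\cB(\cH,\cH')}\big\|_{L^2((a,b))}$, completing the proof.
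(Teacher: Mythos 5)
Your proof is correct, but it takes a different route from the paper's. The paper does not use the factorization of $\bsS$ through $\cH'$ at this stage; instead it first establishes the generic estimate that \emph{any} integral operator $\bsT$ with $\cB(\cH)$-valued kernel satisfying $\int_a^b dx\int_a^b dx'\,\|T(x,x')\|_{\cB(\cH)}^2<\infty$ is bounded on $L^2((a,b);\cH)$ with
\begin{equation*}
\|\bsT\|_{\cB(L^2((a,b);\cH))}\leq\bigg(\int_a^b dx\int_a^b dx'\,\|T(x,x')\|_{\cB(\cH)}^2\bigg)^{1/2},
\end{equation*}
and then simply observes that for the separable kernel $S(x,x')=F(x)G(x')$ the right-hand side is bounded by $\|F\|_{L^2}\|G\|_{L^2}$ via submultiplicativity of the operator norm. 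Your argument instead exploits the rank-one structure directly, writing $(\bsS f)(x)=F(x)\int_a^b dx'\,G(x')f(x')$, i.e.\ factoring $\bsS$ through $\cH'$; this is precisely the factorization $\bsS=Q_F R_G$ that the paper introduces only later, in the proof of Lemma \ref{lA.9}, to upgrade the conclusion to trace class under Hilbert--Schmidt hypotheses on $F(x)$, $G(x)$. Both arguments are two applications of Cauchy--Schwarz and yield the identical bound $\|\bsS\|\leq\|F(\cdot)\|_{L^2}\,\|G(\cdot)\|_{L^2}$. What the paper's version buys is the reusable intermediate estimate \eqref{A.10a}, which is invoked again for the non-separable kernels $H_a^n(x,x')$ in the proof of Lemma \ref{lA.2a}; what your version buys is that it makes the structural reason for boundedness transparent (the operator factors through $\cH'$) and anticipates the trace-class refinement. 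Your attention to the measurability and Bochner-integrability of $x'\mapsto G(x')f(x')$ is appropriate and correctly resolved via separability and Pettis' theorem; the paper handles the analogous point silently here and addresses Pettis only in Lemma \ref{lA.9}.
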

\begin{proof}
Let $f\in L^2((a,b);\cH)$, then for a.e.\ $x \in (a,b)$, and any integral 
operator $\bsT$ in $L^2((a,b);\cH)$ with $\cB(\cH)$-valued integral kernel $T(\cdot \, , \cdot)$, 
one obtains  
\begin{align}
\|(\bsT f)(x)\|_{\cH} & \leq \int_a^b dx' \, \|T(x,x')\|_{\cB(\cH)}\|f(x')\|_{\cH}   \no \\
& \leq \bigg(\int_a^b dx' \, \|T(x,x')\|_{\cB(\cH)}^2\bigg)^{1/2} 
\bigg(\int_a^b dx'' \, \|f(x'')\|_{\cH}^2\bigg)^{1/2},      \lb{A.8a}
\end{align}
and hence,
\begin{equation}
\int_a^b dx \, \|(\bsT f)(x)\|_{\cH}^2 \leq \bigg[\int_a^b dx \int_a^b dx' \, \|T(x,x')\|_{\cB(\cH)}^2\bigg]   
\int_a^b dx'' \, \|f(x'')\|_{\cH}^2,       \lb{A.9aa} 
\end{equation}
yields $\bsT \in \cB(L^2((a,b);\cH))$ whenever 
$\Big[\int_a^b dx \int_a^b dx' \, \|T(x,x')\|_{\cB(\cH)}^2\Big] < \infty$, implying
\begin{equation}
\|\bsT\|_{\cB(L^2((a,b);\cH))} \leq 
\bigg(\int_a^b dx \int_a^b dx' \, \|T(x,x')\|_{\cB(\cH)}^2\bigg)^{1/2}.    \lb{A.10a} 
\end{equation}
Thus, using the special form \eqref{A.6a} of $\bsS$ implies 
\begin{align}
\|\bsS\|_{\cB(L^2((a,b);\cH))}^2 &\leq \int_a^b dx \int_a^b dx' \, \|S(x,x')\|_{\cB(\cH)}^2   \no \\
& = \int_a^b dx \int_a^b dx' \, \|F(x) G(x')\|_{\cB(\cH)}^2    \no \\
& \leq \int_a^b dx \, \|F(x)\|_{\cB(\cH',\cH)}^2 \int_a^b dx' \, \|G(x')\|_{\cB(\cH,\cH')}^2 < \infty. 
\lb{A.10A} 
\end{align}
\end{proof}

At this point we now make the following initial set of assumptions: 
 
\begin{hypothesis} \lb{hA.2}  
Let $\cH$ and $\cH_j$, $j=1,2$, be complex, separable Hilbert spaces and $-\infty\leq a<b\leq \infty$. 
Suppose that for a.e.\ $x \in (a,b)$, $F_j (x) \in \cB(\cH_j,\cH)$ and $G_j(x) \in \cB(\cH,\cH_j)$ 
such that $F_j(\cdot)$ and $G_j(\cdot)$ are uniformly measurable, and 
\begin{equation}  
\|F_j( \cdot)\|_{\cB(\cH_j,\cH)} \in L^2((a,b)), \; 
\|G_j (\cdot)\|_{\cB(\cH,\cH_j)} \in L^2((a,b)), \quad j=1,2. \lb{A.1}
\end{equation}
\end{hypothesis}

Given Hypothesis \ref{hA.2}, we introduce in $L^2((a,b);\cH)$ the operator 
\begin{equation}
(\bsK f)(x)=\int_a^b dx'\, K(x,x')f(x') \, \text{ for a.e.\ $x \in (a,b)$, $f\in L^2((a,b);\cH)$,} 
\end{equation}
with $\cB(\cH)$-valued semi-separable integral kernel
$K(\cdot,\cdot)$ defined by
\begin{equation}
K(x,x')=\begin{cases} F_1(x)G_1(x'), & a<x'< x< b, \\ 
F_2(x)G_2(x'), & a<x<x'<b. \end{cases}  \lb{A.3}
\end{equation}

The operator $\bsK$ is bounded,  
\begin{equation}
\bsK \in \cB\big(L^2((a,b);\cH)\big). 
\end{equation}
In fact, using \eqref{A.10a} and \eqref{A.3}, one readily verifies
\begin{align}
\begin{split} 
& \int_a^bdx\int_a^bdx'\|K(x,x')\|_{\cB(\cH)}^2 
= \int_a^b dx \bigg(\int_a^x + \int_x^b \bigg)dx'\|K(x,x')\|_{\cB(\cH)}^2\\
& \quad \leq \sum_{j=1}^2 \int_a^bdx \|F_j(x)\|_{\cB(\cH_j,\cH)}^2 
\int_a^b dx'\|G_j(x')\|_{\cB(\cH,\cH_j)}^2 < \infty. 
\end{split} 
\end{align}

Associated with $\bsK$ we also introduce the bounded Volterra operators $\bsH_a$ and
$\bsH_b$ in $L^2((a,b);\cH)$ defined by 
\begin{align}
(\bsH_af)(x)&=\int_a^x dx'\, H(x,x')f(x'), \lb{A.4} \\
(\bsH_bf)(x)&=-\int_x^b dx'\, H(x,x')f(x'); \quad f\in
L^2((a,b);\cH), \lb{A.12a} 
\end{align}
with $\cB(\cH)$-valued (triangular) integral kernel
\begin{equation}
H(x,x')=F_1(x)G_1(x')-F_2(x)G_2(x').  \lb{A.6}
\end{equation} 
Moreover, introducing the bounded operator block matrices\footnote{$M^\top$  
denotes the transpose of the operator matrix $M$.}  
\begin{align}
C(x)&=(F_1(x) \;\; F_2(x)), \lb{A.7} \\
B(x)&=(G_1(x) \;\; -G_2(x))^\top, \lb{A.8} 
\end{align}
one verifies
\begin{equation}
H(x,x')=C(x)B(x'), \, \text{ where } \begin{cases} 
a<x'<x<b & \text{for $\bsH_a$,} \\ 
a<x<x'<b & \text {for $\bsH_b$} \end{cases} \lb{A.9}
\end{equation}
and 
\begin{equation}
K(x,x')=\begin{cases} C(x)(I_{\cH_1 \oplus \cH_2}-P_0)B(x'), & a<x'<x<b, \\
-C(x)P_0B(x'), & a<x<x'<b, \end{cases} \lb{A.9a}
\end{equation}
with 
\begin{equation}
 P_0=\begin{pmatrix} 0 & 0 \\ 0 & I_{\cH_2} \end{pmatrix}. \lb{A.9b} 
\end{equation}

The next result proves that, as expected, $\bsH_a$ and $\bsH_b$ are quasi-nilpotent (i.e., have 
vanishing spectral radius) in $L^2((a,b);\cH)$:

\begin{lemma} \lb{lA.2a}
Assume Hypothesis \ref{hA.2}. Then $\bsH_a$ and $\bsH_b$ are quasi-nilpotent in 
$L^2((a,b);\cH)$, equivalently,
\begin{equation}
\sigma (\bsH_a) = \sigma (\bsH_b) = \{0\}.     \lb{A.20a} 
\end{equation}
\end{lemma}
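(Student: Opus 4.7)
The plan is to show that the spectral radii satisfy $\spr(\bsH_a) = \spr(\bsH_b) = 0$ by establishing factorial decay of the norms of iterates via Gelfand's formula $\spr(T) = \lim_{n\to\infty} \|T^n\|^{1/n}$. Since the spectrum of a bounded operator on a complex Banach space is always nonempty, this forces $\sigma(\bsH_a) = \sigma(\bsH_b) = \{0\}$.

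I focus on $\bsH_a$; the argument for $\bsH_b$ is symmetric. The key observation is that the factored form $H(x,x') = C(x)B(x')$ from \eqref{A.9} makes the iterated kernel computable in closed form. Concretely, the $n$-th iterate of $\bsH_a$ has $\cB(\cH)$-valued integral kernel
\begin{equation*}
H_a^{(n)}(x,x') = C(x) \left(\int_{x' < x_1 < \cdots < x_{n-1} < x} B(x_{n-1})C(x_{n-1}) \cdots B(x_1)C(x_1)\, dx_1\cdots dx_{n-1}\right) B(x')
\end{equation*}
for $a<x'<x<b$, and is zero for $x'>x$. Uniform measurability of $C(\cdot)$ and $B(\cdot)$ propagates to the integrand, so the simplex integral is well-defined.

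Next, I introduce the scalar envelopes $\phi(x) := \|C(x)\|_{\cB(\cH_1\oplus\cH_2,\cH)}$ and $\psi(x) := \|B(x)\|_{\cB(\cH,\cH_1\oplus\cH_2)}$, which lie in $L^2((a,b))$ by \eqref{A.1} and the definitions \eqref{A.7}, \eqref{A.8}. Setting $\eta := \phi\psi$, Cauchy--Schwarz gives $\eta \in L^1((a,b))$ with $E := \int_a^b \eta(y)\,dy < \infty$. Submultiplicativity of the operator norm yields
\begin{equation*}
\|H_a^{(n)}(x,x')\|_{\cB(\cH)} \leq \phi(x)\,\psi(x') \int_{x'<x_1<\cdots<x_{n-1}<x} \prod_{k=1}^{n-1}\eta(x_k)\,dx_1\cdots dx_{n-1},
\end{equation*}
and the standard symmetrization identity for integration over a simplex produces
\begin{equation*}
\int_{x'<x_1<\cdots<x_{n-1}<x} \prod_{k=1}^{n-1}\eta(x_k)\,dx_k = \frac{1}{(n-1)!}\left(\int_{x'}^{x}\eta(y)\,dy\right)^{n-1} \leq \frac{E^{n-1}}{(n-1)!}.
\end{equation*}

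Combining these estimates with the Hilbert--Schmidt-type norm bound \eqref{A.10a} from the proof of Lemma \ref{lA.1} yields
\begin{equation*}
\|\bsH_a^n\|_{\cB(L^2((a,b);\cH))} \leq \frac{E^{n-1}}{(n-1)!} \,\|\phi\|_{L^2((a,b))}\|\psi\|_{L^2((a,b))}.
\end{equation*}
Taking $n$-th roots and applying Stirling's formula gives $\|\bsH_a^n\|^{1/n}\to 0$, hence $\spr(\bsH_a)=0$. The argument for $\bsH_b$ proceeds identically after reversing the orientation of the simplex (integrating over $x<x_1<\cdots<x_{n-1}<x'$), with the sign in \eqref{A.12a} being irrelevant for norm estimates. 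The only mild subtlety I anticipate is confirming that the iterated integral can be legitimately rewritten as the kernel of $\bsH_a^n$ (a Fubini-type argument requires only measurability and integrability, both secured by Hypothesis \ref{hA.2}); this is routine but is the one place where care is needed.
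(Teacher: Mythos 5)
Your proposal is correct and follows essentially the same route as the paper: both arguments bound the iterated kernel of $\bsH_a^n$ using the factorization $H(x,x')=C(x)B(x')$, extract the $1/(n-1)!$ decay from the ordered simplex, feed the result into the Hilbert--Schmidt-type norm bound \eqref{A.10a}, and conclude $\spr(\bsH_a)=0$ via Gelfand's formula. The only cosmetic difference is that you phrase the pointwise estimate in terms of $\|C(x)\|$ and $\|B(x')\|$ where the paper uses $2^n\max_j\|F_j(x)\|\max_k\|G_k(x')\|$; these are interchangeable.
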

\begin{proof}
It suffices to discuss $\bsH_a$. Then estimating the norm of $H_a^n (x,x')$, $n \in \bbN$, (i.e., the integral kernel for $\bsH_a^n$) in a straightforward 
manner (cf.\ \eqref{A.4}, \eqref{A.6}) yields for a.e.\ $x,x' \in (a,b)$, 
\begin{align}
& \big\|H_a^n (x,x')\big\|_{\cB(\cH)} \leq 2^n 
\max_{j = 1,2}\big(\|F_j(x)\|_{\cB(\cH_j,\cH)}\big) \max_{k = 1,2}\big(\|G_k(x')\|_{\cB(\cH,\cH_k)}\big) 
\no \\
& \quad \times \f{1}{(n-1)!} \bigg[\int_a^x dx'' \, 
\max_{1 \leq \ell, m \leq 2}\big(\|G_{\ell}(x'')\|_{\cB(\cH,\cH_{\ell})} 
\|F_m (x'')\|_{\cB(\cH_m,\cH)}\big)\bigg]^{(n-1)},     \no \\
& \hspace*{10cm}  n \in \bbN.    
\end{align}
Thus, applying \eqref{A.10a}, one verifies
\begin{align}
& \big\|\bsH_a^n\big\|_{\cB(L^2((a,b);\cH))} \leq 
\bigg(\int_a^b dx \int_a^b dx' \, \|H_a^n(x,x')\|_{\cB(\cH)}^2\bigg)^{1/2}     \no \\
& \quad \leq \max_{j=1,2} \bigg(\int_a^b dx \, \|F_j(x)\|_{\cB(\cH_j,\cH)}\bigg)^{1/2} 
\max_{k=1,2} \bigg(\int_a^b dx' \, \|G_k(x')\|_{\cB(\cH,\cH_k)}\bigg)^{1/2}    \no \\
& \qquad \times \f{2^n}{(n-1)!} \max_{1 \leq \ell,m \leq 2} \bigg(\int_a^b dx'' 
\|G_{\ell}(x'')\|_{\cB(\cH,\cH_{\ell})} \|F_m(x'')\|_{\cB(\cH_m,\cH)}\bigg)^{(n-1)},     \no \\
& \hspace*{10cm} n \in\bbN,    
\end{align}
and hence
\begin{equation}
\spr(\bsH_a) = \lim_{n\to\infty} \big\|\bsH_a^n\big\|_{\cB(L^2((a,b);\cH))}^{1/n} = 0  
\end{equation}
(where $\spr (\, \cdot \,)$ abbreviates the spectral radius). 
Thus, $\bsH_a$ and $\bsH_b$ are quasi-nilpotent in $L^2((a,b);\cH)$ which in turn is equivalent to 
\eqref{A.20a}.
\end{proof}

Next, introducing the linear maps
\begin{align}
&Q\colon \cH_2\mapsto L^2((a,b);\cH), \quad (Q w)(x)=F_2(x) w,
\quad w \in\cH_2, \lb{A.10} \\
&R\colon L^2((a,b);\cH) \mapsto \cH_2, \quad (Rf)=\int_a^b
dx'\,G_2(x')f(x'), \quad f\in L^2((a,b);\cH), \lb{A.11} \\
&S\colon \cH_1 \mapsto L^2((a,b);\cH), \quad (S v)(x)=F_1(x) v,
\quad v\in\cH_1, \lb{A.12} \\
&T\colon L^2((a,b);\cH) \mapsto \cH_1, \quad (Tf)=\int_a^b 
dx'\, G_1(x')f(x'), \quad f\in L^2((a,b);\cH), \lb{A.13} 
\end{align}
one easily verifies the following elementary result (cf.\ 
\cite[Sect.\ IX.2]{GGK90}, \cite[Sect.\ XIII.6]{GGK00} in the case $\dim(\cH)<\infty$):

\begin{lemma} \lb{lA.3} Assume Hypothesis \ref{hA.2}. Then
\begin{align}
\bsK &=\bsH_a + QR \lb{A.14} \\
 &=\bsH_b + ST. \lb{A.15}
\end{align}
\end{lemma}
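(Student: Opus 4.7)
The plan is to verify the two identities by comparing the integral kernels of both sides as $\cB(\cH)$-valued functions on $(a,b) \times (a,b)$, since all operators involved are bounded integral operators on $L^2((a,b);\cH)$ with square-integrable $\cB(\cH)$-valued kernels (by Lemma \ref{lA.1} and the discussion preceding the lemma). Two such integral operators coincide precisely when their kernels agree a.e.

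First I would unpack the kernels of the rank-type pieces $QR$ and $ST$. Directly from \eqref{A.10}--\eqref{A.13} one computes, for $f \in L^2((a,b);\cH)$ and a.e.\ $x \in (a,b)$,
\begin{equation*}
(QRf)(x) = F_2(x) \int_a^b dx' \, G_2(x') f(x'), \quad (STf)(x) = F_1(x)\int_a^b dx'\, G_1(x') f(x'),
\end{equation*}
so $QR$ has $\cB(\cH)$-valued integral kernel $F_2(x)G_2(x')$ on all of $(a,b)\times(a,b)$, and $ST$ has integral kernel $F_1(x)G_1(x')$ on $(a,b)\times(a,b)$. Meanwhile $\bsH_a$ has kernel $H(x,x') = F_1(x)G_1(x') - F_2(x)G_2(x')$ for $a < x' < x < b$ and $0$ for $a < x < x' < b$, while $\bsH_b$ has kernel $0$ for $a < x' < x < b$ and $-H(x,x') = F_2(x)G_2(x') - F_1(x)G_1(x')$ for $a < x < x' < b$.

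Now I would simply add the kernels on each of the two triangular regions. For $\bsH_a + QR$: on $a<x'<x<b$, the kernel equals
\begin{equation*}
[F_1(x)G_1(x') - F_2(x)G_2(x')] + F_2(x)G_2(x') = F_1(x)G_1(x'),
\end{equation*}
and on $a<x<x'<b$ it equals $0 + F_2(x)G_2(x') = F_2(x)G_2(x')$; comparing with \eqref{A.3} this is precisely $K(x,x')$, proving \eqref{A.14}. Similarly, for $\bsH_b + ST$: on $a<x'<x<b$ the kernel is $0 + F_1(x)G_1(x') = F_1(x)G_1(x')$, and on $a<x<x'<b$ it is $[F_2(x)G_2(x') - F_1(x)G_1(x')] + F_1(x)G_1(x') = F_2(x)G_2(x')$, again matching $K(x,x')$ and yielding \eqref{A.15}.

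There is no real obstacle here; the result is an algebraic identity at the kernel level, and the only thing to verify beforehand is that all three summands in each identity are genuine bounded integral operators whose equality can be read off from a.e.\ equality of their kernels, which follows from the $L^2$ bounds in Hypothesis \ref{hA.2} together with Lemma \ref{lA.1} applied to the separable pieces.
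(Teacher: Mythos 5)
Your argument is correct and is precisely the elementary kernel computation that the paper itself omits (the lemma is stated with the remark that ``one easily verifies'' it, with no written proof). Adding the kernels of $QR$ and $ST$ to the triangular kernels of $\bsH_a$ and $\bsH_b$ and matching the result against \eqref{A.3} is exactly the intended verification, and your preliminary justification via Lemma \ref{lA.1} that all summands are bounded integral operators determined a.e.\ by their kernels is appropriate.
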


To describe the inverse of $\bsI-\alpha \bsH_a$ and $\bsI-\alpha \bsH_b$,
$\alpha\in\bbC$, one introduces the block operator matrix $A(\cdot)$ in $\cH_1 \oplus \cH_2$
\begin{align}
A(x)&=\begin{pmatrix} G_1(x)F_1(x) & G_1(x)F_2(x) \\ 
-G_2(x)F_1(x) & -G_2(x)F_2(x) \end{pmatrix} \lb{A.16} \\[1mm]
& = B(x)C(x)\, \text{ for a.e.\ $x\in (a,b)$} \lb{A.17}
\end{align}
and considers the linear evolution equation in $\cH_1 \oplus \cH_2$, 
\begin{equation}
\begin{cases} u'(x) = \alpha A(x) u(x), \quad \alpha \in \bbC, \, \text{ for a.e.\ $x \in (a,b)$,}  \\
u(x_0) = u_0 \in \cH_1 \oplus \cH_2  \end{cases}    \lb{A.32a} 
\end{equation}
for some $x_0 \in (a,b)$. Since $A(x) \in \cB(\cH_1 \oplus \cH_2)$ for a.e. $x\in (a,b)$, 
$A(\cdot)$ is uniformly measurable, and $\|A(\cdot)\|_{\cB(\cH_1 \oplus \cH_2)} \in L^1((a,b))$, 
Theorems\ 1.1 and 1.4 in \cite{OY11} (see also \cite{HO00}, which includes a discussion of a 
nonlinear extension of \eqref{A.32a}) apply and yield the existence of a unique propagator 
$U(\,\cdot \, ,\,\cdot \, ; \alpha)$ on $(a,b) \times (a,b)$ satisfying the following conditions: 
\begin{align}
& U(\cdot \, ,\cdot \, ; \alpha):(a,b) \times (a,b) \to \cB(\cH_1 \oplus \cH_2) \, \text{ is uniformly (i.e., norm) continuous.}  
\lb{A.33A} \\
& \text{There exists $C_{\alpha} > 0$ such that for all $x, x' \in (a,b)$, } \, 
\|U(x,x'; \alpha)\|_{\cB(\cH)} \leq C_{\alpha}.    \lb{A.33B} \\
& \text{For all $x, x', x'' \in (a,b)$, } \, U(x,x'; \alpha) U(x',x''; \alpha) = U(x,x''; \alpha),   \lb{A.33C} \\ 
& \hspace*{6.21cm} U(x,x; \alpha) = I_{\cH_1 \oplus \cH_2}.    \no \\ 
&\text{For all $u \in \cH_1 \oplus \cH_2$, $\alpha \in \bbC$, }\no\\
&\quad U(x,\cdot\, ;\alpha) u, U(\cdot \, ,x;\alpha) u \in W^{1,1} ((a,b); \cH_1 \oplus \cH_2),\quad x\in (a,b), 
\lb{A.33D} \\ 
& \text{and}    \no \\
& \text{for a.e.\ $x \in (a,b)$, } \, (\partial/\partial x) U(x,x'; \alpha) u = \alpha A(x) U(x,x'; \alpha) u, 
\quad x' \in (a,b),   \lb{A.33E} \\
& \text{for a.e.\ $x' \in (a,b)$, } \, (\partial/\partial x') U(x,x'; \alpha) u = - \alpha U(x,x'; \alpha) A(x') u, 
\quad x \in (a,b).   \lb{A.33F}
\end{align} 

Hence, $u(\, \cdot \, ; \alpha)$ defined by
\begin{equation}
u(x; \alpha) = U(x,x_0; \alpha) u_0, \quad x \in (a,b),      \lb{A.34a} 
\end{equation}
is the unique solution of \eqref{A.32a}, satisfying
\begin{equation}
u(\,\cdot\,; \alpha) \in W^{1,1} ((a,b); \cH_1 \oplus \cH_2).     \lb{A.35a} 
\end{equation}

In fact, an explicit construction (including the proof of uniqueness and that of the properties of 
\eqref{A.33A}--\eqref{A.33F}) of $U(\cdot\, , \cdot \, ; \alpha)$ can simply be obtained by a 
norm-convergent iteration of 
\begin{equation}
U(x,x'; \alpha) = I_{\cH_1 \oplus \cH_2} + \alpha \int_{x'}^x dx'' \, A(x'') U(x'', x'; \alpha), 
\quad x, x' \in (a,b).      \lb{A.36a} 
\end{equation}
Moreover, because of the integrability assumptions made in Hypothesis \ref{hA.2}, 
\eqref{A.32a}-\eqref{A.36a} extend to $x, x' \in [a,b)$ (resp., $x, x' \in (a,b]$) if $a > - \infty$ 
(resp., $b < \infty$) and permit taking norm limits of $U(x,x'; \alpha)$ as $x, x'$ to $-\infty$ if 
$a=-\infty$ (resp., $+\infty$ if $b= +\infty$), see also Remark \ref{rA.5}. 

The next result appeared in \cite[Sect.\ IX.2]{GGK90}, 
\cite[Sects.\ XIII.5, XIII.6]{GGK00} in the special case $\dim(\cH)<\infty$:

\begin{theorem} \lb{tA.4} 
Assume Hypothesis \ref{hA.2}. Then, \\
$(i)$ $\bsI-\alpha \bsH_a$ and $\bsI-\alpha \bsH_b$ are boundedly invertible for all
$\alpha\in\bbC$ and 
\begin{align}
(\bsI-\alpha \bsH_a)^{-1}&= \bsI+\alpha \bsJ_a(\alpha), \lb{A.19} \\
(\bsI-\alpha \bsH_b)^{-1}&= \bsI+\alpha \bsJ_b(\alpha), \lb{A.20} \\
(\bsJ_a(\alpha) f)(x)&=\int_a^x dx'\, J(x,x'; \alpha )f(x'), \lb{A.21} \\ 
(\bsJ_b(\alpha) f)(x)&=-\int_x^b dx'\, J(x,x'; \alpha )f(x'); \quad f \in
L^2((a,b);\cH), \lb{A.22} \\  
J(x,x'; \alpha )&=C(x) U(x, x'; \alpha) B(x'), \, \text{ where }
\begin{cases}  a<x'<x<b & \text{for $\bsJ_a(\alpha)$,} \\ 
a<x<x'<b & \text {for $\bsJ_b(\alpha)$.} \end{cases}  \lb{A.23}  
\end{align}
$(ii)$ Let $\alpha\in\bbC$. Then $\bsI-\alpha \bsK$ is boundedly invertible if and only if 
$I_{\cH_2}-\alpha R(\bsI-\alpha \bsH_a)^{-1}Q$ is. Similarly,
$\bsI-\alpha \bsK$ is boundedly invertible if and only if 
$I_{\cH_1}-\alpha T(\bsI-\alpha \bsH_b)^{-1}S$ is. 
In particular,
\begin{align}
& (\bsI-\alpha \bsK)^{-1}=(\bsI-\alpha \bsH_a)^{-1}
+\alpha (\bsI-\alpha \bsH_a)^{-1}QR(\bsI-\alpha \bsK)^{-1} \lb{A.24} \\
& \quad =(\bsI-\alpha \bsH_a)^{-1} \no \\
& \qquad +\alpha (\bsI-\alpha \bsH_a)^{-1}Q\big[I_{\cH_2}
-\alpha R(\bsI-\alpha \bsH_a)^{-1}Q\big]^{-1}R(\bsI-\alpha \bsH_a)^{-1}
\lb{A.25} \\
& \quad =(\bsI-\alpha \bsH_b)^{-1}+\alpha (\bsI-\alpha \bsH_b)^{-1}
ST(\bsI-\alpha \bsK)^{-1} \lb{A.26} \\
& \quad =(\bsI-\alpha \bsH_b)^{-1} \no \\
& \qquad +\alpha (\bsI-\alpha \bsH_b)^{-1}S
\big[I_{\cH_1}-\alpha T(\bsI-\alpha \bsH_b)^{-1}S\big]^{-1}
T(\bsI-\alpha \bsH_b)^{-1}.
\lb{A.27}
\end{align}
\end{theorem}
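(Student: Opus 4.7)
The plan is to treat parts (i) and (ii) separately. For (i), bounded invertibility of $\bsI - \alpha \bsH_a$ and $\bsI - \alpha \bsH_b$ for every $\alpha \in \bbC$ is immediate from Lemma \ref{lA.2a}: quasi-nilpotence gives $\sigma(\alpha \bsH_a) = \sigma(\alpha \bsH_b) = \{0\}$, so $1$ lies in the resolvent set of both operators and the inverses are bounded. The real content is to identify $(\bsI - \alpha \bsH_a)^{-1}$ with the operator $\bsI + \alpha \bsJ_a(\alpha)$ whose integral kernel is the propagator-based expression \eqref{A.23}. I would verify \eqref{A.19}--\eqref{A.23} by checking directly that, for $f \in L^2((a,b);\cH)$, the function $g := f + \alpha \bsJ_a(\alpha) f$ satisfies $(\bsI - \alpha \bsH_a) g = f$. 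Writing this out and exchanging order of integration via Fubini reduces the claim to the pointwise kernel identity
\begin{equation*}
J(x,x';\alpha) = H(x,x') + \alpha \int_{x'}^x dy\, H(x,y) J(y,x';\alpha), \qquad a < x' < x < b.
\end{equation*}
Substituting $H(x,y) = C(x) B(y)$ (valid on the region $y < x$ by \eqref{A.6}--\eqref{A.9}) together with the ansatz $J(x,x';\alpha) = C(x) U(x,x';\alpha) B(x')$, and pulling the factors $C(x)$ and $B(x')$ (which are constant in the integration variable $y$) outside the integral, the identity collapses to
\begin{equation*}
U(x,x';\alpha) = I_{\cH_1 \oplus \cH_2} + \alpha \int_{x'}^x dy\, A(y) U(y, x'; \alpha),
\end{equation*}
which is precisely the defining relation \eqref{A.36a} of the propagator, once one uses $A = BC$ from \eqref{A.17}. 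The case $\bsH_b$ is handled in parallel, with the opposite orientation of the Volterra integrals.

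For (ii), I would use the decomposition $\bsK = \bsH_a + QR$ from Lemma \ref{lA.3} to factor
\begin{equation*}
\bsI - \alpha \bsK = (\bsI - \alpha \bsH_a) \bigl[\bsI - \alpha (\bsI - \alpha \bsH_a)^{-1} Q R\bigr].
\end{equation*}
Since the first factor is already invertible by (i), $\bsI - \alpha \bsK$ is boundedly invertible if and only if the bracket is; and by the standard abstract identity that $\bsI - XY$ is invertible in $\cB(L^2((a,b);\cH))$ if and only if $I_{\cH_2} - YX$ is invertible in $\cB(\cH_2)$ (applied with $X = \alpha(\bsI - \alpha \bsH_a)^{-1} Q$ and $Y = R$), this is in turn equivalent to bounded invertibility of $I_{\cH_2} - \alpha R(\bsI - \alpha \bsH_a)^{-1} Q$. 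When the latter holds, direct verification of the Sherman--Morrison--Woodbury identity
\begin{equation*}
\bigl[\bsI - \alpha M Q R\bigr]^{-1} = \bsI + \alpha M Q \bigl[I_{\cH_2} - \alpha R M Q\bigr]^{-1} R, \qquad M := (\bsI - \alpha \bsH_a)^{-1},
\end{equation*}
by multiplying out both sides and telescoping, yields \eqref{A.24} and \eqref{A.25}. The analogous argument starting from $\bsK = \bsH_b + ST$ produces \eqref{A.26} and \eqref{A.27}.

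The main technical obstacle is the kernel identification in part (i). Bounded invertibility itself is free from Lemma \ref{lA.2a}, and once one adopts the ansatz $J = C U B$ the verification is essentially a Fubini exchange in an iterated Volterra integral combined with the factorization $A = BC$. What requires genuine care is the bookkeeping: the orientation of integration must be chosen so that on the relevant region $y \in (x', x)$ both branches of the piecewise-defined kernel $H$ collapse to the single product $C(x) B(y)$, and one must invoke the regularity \eqref{A.33D}--\eqref{A.33F} of $U$ in order to use its integral form \eqref{A.36a} (rather than the almost-everywhere differential form) when matching kernels. Part (ii) is then a largely formal consequence of classical reduction-to-a-smaller-space identities.
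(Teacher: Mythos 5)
Your argument is correct and rests on the same pillars as the paper's proof: the ansatz $J(x,x';\alpha)=C(x)U(x,x';\alpha)B(x')$ built from the propagator of \eqref{A.32a}, the factorization $A=BC$ from \eqref{A.17}, and, for part (ii), the splittings $\bsK=\bsH_a+QR=\bsH_b+ST$ of Lemma \ref{lA.3} combined with the standard factored-perturbation identities (the paper cites Kato's resolvent equation from \cite{GLMZ05}; your Sherman--Morrison--Woodbury verification is the same computation). Where you genuinely differ is in the mechanics of part (i): the paper verifies $\big((\bsI-\alpha\bsH_a)(\bsI+\alpha\bsJ_a(\alpha))f\big)(x)=f(x)$ at the operator level by inserting an auxiliary point $x_0$, splitting $U(x',x'';\alpha)=U(x',x_0;\alpha)U(x_0,x'';\alpha)$, recognizing $\alpha B(x')C(x')U(x',x_0;\alpha)$ as $(\partial/\partial x')U(x',x_0;\alpha)$ via \eqref{A.33E}, and integrating by parts; you instead reduce the whole claim to the single kernel identity $J=H+\alpha H\ast J$ on the region $x'<y<x$ and observe that, after pulling $C(x)$ and $B(x')$ out of the integral, it collapses to the defining integral equation \eqref{A.36a}. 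Your route avoids the integration by parts and the auxiliary point altogether, at the cost of a Fubini exchange that is harmless under Hypothesis \ref{hA.2}. Two bookkeeping points are worth making explicit: your kernel identity exhibits only a right inverse, so you genuinely need the prior invertibility from Lemma \ref{lA.2a} --- which you correctly invoke --- to conclude it is the two-sided inverse (the left-inverse identity $J=H+\alpha J\ast H$ would instead require the backward relation corresponding to \eqref{A.33F}), whereas the paper checks both compositions and so obtains invertibility constructively without appealing to Lemma \ref{lA.2a} at all.
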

\begin{proof} 
To prove the results \eqref{A.19}--\eqref{A.23} it suffices to focus on $\bsH_a$. Let 
$f \in L^2((a,b); \cH)$. Then using $H(x,x') = C(x) B(x')$ and 
$A(x) = B(x) C(x)$  (cf.\ \eqref{A.9} 
and \eqref{A.17}) one computes (for some $x_0 \in (a,b)$) with the help of \eqref{A.33E}, 
\begin{align}
& \big((\bsI - \alpha \bsH_a)(\bsI + \alpha \bsJ_a(\alpha)) f\big)(x) 
= f(x) - \alpha \int_a^x dx' \, C(x) B(x') f(x') \no \\
& \qquad + \alpha \int_a^x dx' \, C(x) U(x,x'; \alpha) B(x') f(x')    \no \\ 
& \qquad - \alpha^2 \int_a^x dx' \, C(x) B(x') \int_a^{x'} dx'' \, C(x') U(x',x''; \alpha) B(x'') f(x'')  \no \\
& \quad = f(x) - \alpha \int_a^x dx' \, C(x) B(x') f(x') 
+ \alpha \int_a^x dx' \, C(x) U(x,x'; \alpha) B(x') f(x')    \no \\ 
& \qquad - \alpha^2 \int_a^x dx' \, C(x)B(x')C(x')U(x',x_0;\alpha) 
\int_a^{x'} dx'' \, U(x_0,x''; \alpha) B(x'') f(x'')   \no \\
& \quad = f(x) - \alpha \int_a^x dx' \, C(x) B(x') f(x') 
+ \alpha \int_a^x dx' \, C(x) U(x,x'; \alpha) B(x') f(x')    \no \\ 
& \qquad - \alpha \int_a^x dx' \, C(x) [(\partial/\partial x') U(x',x_0; \alpha)]
\int_a^{x'} dx'' \, U(x_0,x''; \alpha) B(x'') f(x'')   \no \\
& \quad = f(x) - \alpha \int_a^x dx' \, C(x) B(x') f(x') 
+ \alpha \int_a^x dx' \, C(x) U(x,x'; \alpha) B(x') f(x')    \no \\ 
& \qquad - \alpha C(x) \bigg[U(x',x_0; \alpha) \int_a^{x'} dx'' \, U(x_0,x''; \alpha) B(x'') f(x'') \bigg|_{x'=a}^x 
\no \\[1mm] 
& \hspace*{2.3cm} - \int_a^x dx' \, U(x',x_0; \alpha) U(x_0,x'; \alpha) B(x') f(x')\bigg]    \no \\
& \quad = f(x) \, \text{ for a.e.\ $x \in (a,b)$.} 
\end{align}
In the same manner one proves 
\begin{equation}
\big((\bsI + \alpha \bsJ_a(\alpha))(\bsI - \alpha \bsH_a) f\big)(x) = f(x)  \, \text{ for a.e.\ $x \in (a,b)$.} 
\end{equation} 

By \eqref{A.14} and \eqref{A.15}, $\bsK - \bsH_a$ and $\bsK - \bsH_b$ factor into $QR$ and $ST$, respectively.  Consequently, \eqref{A.24} and \eqref{A.26} follow from the second resolvent identity, while \eqref{A.25} and \eqref{A.27} are direct applications of Kato's resolvent equation for factored 
perturbations (cf. \cite[Sect.~2]{GLMZ05}). 
\end{proof}

\begin{remark} \lb{rA.5}  
Even though this will not be used in this paper, we mention for completeness that if 
$(\bsI - \alpha \bsK)^{-1} \in \cB\big(L^2((a,b);\cH)\big)$, and if $U(\cdot\, ,a ; \alpha)$ 
is defined by 
\begin{equation}
U(x,a; \alpha) = I_{\cH_1 \oplus \cH_2} + \alpha \int_a^x dx' \, A(x') U(x',a; \alpha), \quad x \in (a,b), 
\lb{A.48a}
\end{equation}
and partitioned with respect to $\cH_1 \oplus \cH_2$ as
\begin{equation}
U(x,a; \alpha) = \begin{pmatrix} U_{1,1}(x,a; \alpha) & U_{1,2}(x,a;\alpha) \\
U_{2,1}(x,a; \alpha) & U_{2,2}(x,a;\alpha) \end{pmatrix}, \quad x \in (a,b), 
\end{equation}
then 
\begin{align}
(\bsI - \alpha \bsK)^{-1} &= \bsI+\alpha \bsL(\alpha), \lb{A.28} \\
(\bsL(\alpha) f)(x) &= \int_a^b dx'\, L(x,x'; \alpha )f(x'), \lb{A.29} \\  
L(x,x'; \alpha )  &= \begin{cases} C(x)U(x,a; \alpha) (I-P(\alpha))
U(x',a; \alpha)^{-1}B(x'), & a<x'<x<b, \\  
-C(x)U(x,a; \alpha) P(\alpha) U(x',a; \alpha)^{-1}B(x'), & a<x<x'<b, \end{cases} 
\lb{A.30}  
\end{align}
where 
\begin{equation}
P(\alpha)=\begin{pmatrix} 0 & 0 \\ U_{2,2}(b,a; \alpha )^{-1}
U_{2,1}(b,a; \alpha ) & I_{\cH_2} \end{pmatrix},     \lb{A.33}
\end{equation}
with $U(b,a; \alpha) = \nlim_{x \uparrow b} U(x,a; \alpha)$. (Here $\nlim$ abbreviates the limit 
in the norm topology.)
These results can be shown as in the finite-dimensional case treated in \cite[Ch.\ IX]{GGK90}.
\end{remark}

\begin{lemma} \lb{lA.6}
Assume Hypothesis \ref{hA.2} and introduce, for $\alpha\in\bbC$ and a.e.\ $x \in (a,b)$, the Volterra 
integral equations
\begin{align}
\widehat F_1(x; \alpha )&=F_1(x)-\alpha \int_x^b dx'\, H(x,x')\widehat F_1(x'; \alpha ), \lb{A.35} \\ 
\widehat F_2(x; \alpha )&=F_2(x)+\alpha \int_a^x dx'\, H(x,x')\widehat F_2(x'; \alpha ). \lb{A.36}  
\end{align}  
Then there exist unique a.e.\ solutions on $(a,b)$, $\widehat F_j (\cdot\, ; \alpha) \in \cB(\cH_j,\cH)$,  
of \eqref{A.35}, \eqref{A.36} such that $\widehat F_j(\cdot\, ; \alpha)$ are uniformly measurable, and 
\begin{equation}  
\big\|\widehat F_j( \cdot \, ; \alpha)\big\|_{\cB(\cH_j,\cH)} \in L^2((a,b)), \quad j=1,2. \lb{A.45}
\end{equation}
\end{lemma}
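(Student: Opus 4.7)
My plan is to construct $\widehat F_1(\cdot\,;\alpha)$ by the standard Picard iteration for Volterra equations, transplanted to the operator-valued setting; the argument for $\widehat F_2$ is completely parallel with $\int_a^x$ replacing $\int_x^b$. Set $\widehat F_1^{(0)}(x;\alpha) := F_1(x)$ and inductively
\[
\widehat F_1^{(n)}(x;\alpha) := -\alpha \int_x^b dx'\, H(x,x')\,\widehat F_1^{(n-1)}(x';\alpha), \quad n \in \bbN,
\]
and propose $\widehat F_1(\cdot\,;\alpha) := \sum_{n=0}^\infty \widehat F_1^{(n)}(\cdot\,;\alpha)$, which formally satisfies \eqref{A.35}.

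The convergence estimates mirror those already carried out in the proof of Lemma \ref{lA.2a}. Introducing the $L^2((a,b))$ scalar majorants
\[
f(x) := \max_{j=1,2} \|F_j(x)\|_{\cB(\cH_j,\cH)}, \quad g(x) := \max_{j=1,2} \|G_j(x)\|_{\cB(\cH,\cH_j)},
\]
one has $\|H(x,x')\|_{\cB(\cH)} \leq 2 f(x) g(x')$ and $fg \in L^1((a,b))$ by Cauchy--Schwarz. Applying this bound $n$ times, collapsing the innermost integral $\int_{x_{n-1}}^b g(x_n)\|F_1(x_n)\|_{\cB(\cH_1,\cH)}\, dx_n$ by Cauchy--Schwarz, and using the simplex-volume identity for the remaining iterated integral of $fg$, yields the pointwise estimate
\[
\|\widehat F_1^{(n)}(x;\alpha)\|_{\cB(\cH_1,\cH)} \leq (2|\alpha|)^n\, f(x)\, \|g\|_{L^2}\, \|F_1\|_{L^2((a,b);\cB(\cH_1,\cH))}\, \frac{\|fg\|_{L^1((a,b))}^{n-1}}{(n-1)!}
\]
for $n \in \bbN$. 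Summing in $n$ produces absolute convergence of the series in the uniform operator norm for a.e.\ $x\in(a,b)$, together with a pointwise majorant of the form $\|F_1(x)\|_{\cB(\cH_1,\cH)} + c_\alpha f(x)$ lying in $L^2((a,b))$, which simultaneously delivers the limit $\widehat F_1(\cdot\,;\alpha)$ and the $L^2$-bound \eqref{A.45}.

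For the remaining assertions, uniform measurability of each $\widehat F_1^{(n)}(\cdot\,;\alpha)$ follows inductively: the product of uniformly measurable operator-valued functions is uniformly measurable, and the Bochner integral of a uniformly measurable integrand with the Bochner-integrable majorant provided by the estimate above depends uniformly measurably on $x$; the uniform operator-norm limit of uniformly measurable functions is again uniformly measurable, so $\widehat F_1(\cdot\,;\alpha)$ is uniformly measurable. Passing to the limit in the iteration, dominated convergence (justified by the same majorants) confirms that $\widehat F_1(\cdot\,;\alpha)$ solves \eqref{A.35}. Uniqueness is the standard Volterra--Gronwall argument: the difference $\Delta$ of two solutions in the stated class satisfies the homogeneous Volterra equation $\Delta(x) = -\alpha \int_x^b H(x,x')\Delta(x')\,dx'$, and iterating $n$ times produces a bound of the form $C (2|\alpha|\|fg\|_{L^1})^n/n! \to 0$, forcing $\Delta \equiv 0$ a.e. The only point requiring genuine care, and the nearest thing to an obstacle, is the bookkeeping of Bochner measurability of the operator-valued integrands at each stage; this is automatic here because the hypotheses on $F_j$, $G_j$, together with separability of $\cH$ and $\cH_j$, place us squarely in the Bochner framework and all composition/integration steps preserve uniform measurability.
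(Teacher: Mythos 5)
Your proposal is correct and follows essentially the same route as the paper: the same Picard/Neumann iteration, the same $2f(x)g(x')$ majorant for $\|H(x,x')\|_{\cB(\cH)}$ with a factorial simplex bound yielding an $L^2((a,b))$ pointwise majorant, and the standard iterated-Volterra argument for uniqueness. The only (harmless) difference is bookkeeping — you peel off the innermost factor by Cauchy--Schwarz to get $\|g\|_{L^2}\|F_1\|_{L^2}\,\|fg\|_{L^1}^{n-1}/(n-1)!$ where the paper keeps all $n$ factors inside the simplex to get $\frac{1}{n!}\big[\int_x^b\max_{k,\ell}\|G_k\|\|F_\ell\|\big]^n$ — and you spell out the measurability and uniqueness steps that the paper leaves as "the familiar iteration procedure."
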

\begin{proof}
Introducing, 
\begin{align}
\widehat F_{1,0} (x; \alpha) &= F_1(x),    \no \\
\widehat F_{1,n} (x; \alpha) &= - \alpha \int_x^b dx' \, H(x,x') \widehat F_{1,n-1} (x'; \alpha), \quad n \in\bbN,    \\
\widehat F_{2,0} (x; \alpha) &= F_2(x),    \no \\
\widehat F_{2,n} (x; \alpha) &= \alpha \int_a^x dx' \, H(x,x') \widehat F_{2,n-1} (x'; \alpha), \quad n \in\bbN,
\end{align}
for a.e.\ $x \in (a,b)$, the familiar iteration procedure (in the scalar or matrix-valued context) yields 
for fixed $x \in (a,b)$ except for a set of Lebesgue measure zero, 
\begin{align}
& \big\|\widehat F_{1,n} (x; \alpha)\big\|_{\cB(\cH_1,\cH)} \leq (2 |\alpha|)^n 
\max_{j=1,2} \big(\|F_j(x)\|_{\cB(\cH_j,\cH)}\big)     \\ 
& \quad \times \f{1}{n!} 
\bigg[\int_x^b dx' \, \max_{1 \leq k, \ell \leq 2} \big(\|G_k (x')\|_{\cB(\cH,\cH_k)} 
\|F_{\ell} (x')\|_{\cB(\cH_{\ell},\cH)}\big)\bigg]^n, \quad n \in \bbN,     \no \\
& \big\|\widehat F_{2,n} (x; \alpha)\big\|_{\cB(\cH_2,\cH)} \leq (2 |\alpha|)^n 
\max_{j=1,2} \big(\|F_j (x)\|_{\cB(\cH_j,\cH)}\big)      \\ 
& \quad \times \f{1}{n!} 
\bigg[\int_a^x dx' \, \max_{1 \leq k, \ell \leq 2} \big(\|G_k (x')\|_{\cB(\cH,\cH_k)} 
\|F_{\ell} (x')\|_{\cB(\cH_{\ell},\cH)}\big)\bigg]^n, \quad n \in \bbN.    \no 
\end{align}
Thus, the norm convergent expansions 
\begin{align}
\widehat F_j (x; \alpha) = \sum_{n=0}^{\infty} \widehat F_{j,n}(x; \alpha), \quad j=1,2, \, 
\text{ for a.e.\ $x \in (a,b)$,}  
\lb{A.50}
\end{align}
yield the bounds
\begin{align}
\big\|\widehat F_j (x; \alpha)\big\|_{\cB(\cH_j,\cH)} &\leq \max_{k=1,2} \big(\|F_k(x)\|_{\cB(\cH_k,\cH)}\big)   \\
& \quad \times \max_{1 \leq \ell, m \leq 2}
\exp\bigg(2 |\alpha| \int_a^b dx' \|G_{\ell} (x')\|_{\cB(\cH,\cH_{\ell})} 
\|F_m (x')\|_{\cB(\cH_m,\cH)}\bigg)      \no 
\end{align}
for a.e.\ $x \in (a,b)$. As in the scalar case (resp., as in the proof of Theorem \ref{tA.4})  
one shows that \eqref{A.50} uniquely satisfies \eqref{A.35}, \eqref{A.36} 
\end{proof}

\begin{lemma} \lb{lA.7}
Assume Hypothesis \ref{hA.2}, let $\alpha\in\bbC$, and introduce 
\begin{align}
& U(x; \alpha)=\begin{pmatrix} I_{\cH_1}-\alpha \int_x^b dx'\, G_1(x') 
\widehat F_1(x'; \alpha ) & \alpha\int_a^x dx'\, G_1(x')\widehat F_2(x'; \alpha ) \\ 
\alpha\int_x^b dx'\, G_2(x')\widehat F_1(x'; \alpha ) & I_{\cH_2}-\alpha \int_a^x
dx'\, G_2(x')\widehat F_2(x'; \alpha ) \end{pmatrix},  \no \\
& \hspace*{9.4cm} x\in (a,b). \lb{A.37}  
\end{align}
If 
\begin{align}
\bigg[I_{\cH_1}-\alpha \int_a^b dx\, G_1(x) \widehat F_1(x; \alpha )\bigg]^{-1} 
\in \cB(\cH_1),    \lb{A.38} \\
\intertext{or equivalently,} 
\bigg[I_{\cH_2}-\alpha \int_a^b dx\, G_2(x)\widehat F_2(x; \alpha )\bigg]^{-1} \in \cB(\cH_2), \lb{A.39} 
\end{align}
then 
\begin{equation}
U(a; \alpha) , \, U(b; \alpha), \, U(x; \alpha), \; x\in (a,b), 
\end{equation} 
are boundedly invertible in $\cH_1 \oplus \cH_2$. In particular,  
\begin{equation}
U(x,x'; \alpha) = U(x; \alpha) U(x'; \alpha)^{-1}, \quad x, x' \in (a,b),   \lb{A.72a}
\end{equation}   
is the propagator for the evolution equation \eqref{A.32a} satisfying \eqref{A.33A}--\eqref{A.36a}, and 
\eqref{A.72a} extends by norm continuity to $x,x' \in \{a,b\}$. 
\end{lemma}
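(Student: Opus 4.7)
The plan is to split the proof into three stages: (i) evaluate $U(x;\alpha)$ at the endpoints $x=a,b$, where block triangularity reduces invertibility to the conditions \eqref{A.38}, \eqref{A.39}; (ii) show $U(\cdot\,;\alpha)$ satisfies the same evolution equation as the propagator in \eqref{A.32a}; (iii) use uniqueness for \eqref{A.32a} to propagate invertibility from one endpoint to every $x \in [a,b]$ and deduce \eqref{A.72a}.

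First, the $L^2$-integrability of $\|G_j(\cdot)\|$ from Hypothesis \ref{hA.2} and of $\|\widehat F_j(\cdot\,;\alpha)\|$ from Lemma \ref{lA.6} forces each integrand $G_j(\cdot)\widehat F_k(\cdot\,;\alpha)$ to lie in $L^1((a,b);\cB(\cH_k,\cH_j))$ by Cauchy--Schwarz, so every entry of $U(\cdot\,;\alpha)$ extends norm-continuously to $x \in [a,b]$. Direct substitution gives
\begin{equation*}
U(a;\alpha) = \begin{pmatrix} I_{\cH_1}-\alpha \int_a^b dx\, G_1(x) \widehat F_1(x;\alpha) & 0 \\ \alpha \int_a^b dx\, G_2(x) \widehat F_1(x;\alpha) & I_{\cH_2} \end{pmatrix},
\end{equation*}
\begin{equation*}
U(b;\alpha) = \begin{pmatrix} I_{\cH_1} & \alpha \int_a^b dx\, G_1(x) \widehat F_2(x;\alpha) \\ 0 & I_{\cH_2}-\alpha \int_a^b dx\, G_2(x) \widehat F_2(x;\alpha) \end{pmatrix}.
\end{equation*}
A block triangular operator on $\cH_1 \oplus \cH_2$ is boundedly invertible iff both diagonal blocks are, so \eqref{A.38} is equivalent to invertibility of $U(a;\alpha)$ and \eqref{A.39} to that of $U(b;\alpha)$.

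Second, differentiating the entries of $U(x;\alpha)$ in $x$ (a.e., valid for $L^1$ integrands) yields
\begin{equation*}
U'(x;\alpha) = \alpha \begin{pmatrix} G_1(x)\widehat F_1(x;\alpha) & G_1(x)\widehat F_2(x;\alpha) \\ -G_2(x)\widehat F_1(x;\alpha) & -G_2(x)\widehat F_2(x;\alpha) \end{pmatrix}.
\end{equation*}
Using $H(x,x')=F_1(x)G_1(x')-F_2(x)G_2(x')$ together with the Volterra equations \eqref{A.35}, \eqref{A.36}, a short telescoping computation produces the pivotal identity
\begin{equation*}
F_1(x) U_{1j}(x;\alpha) + F_2(x) U_{2j}(x;\alpha) = \widehat F_j(x;\alpha), \quad j=1,2,
\end{equation*}
for a.e.\ $x \in (a,b)$. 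Substituting this into $\alpha A(x) U(x;\alpha)$ with $A(x) = B(x)C(x)$ from \eqref{A.16} reproduces the display for $U'(x;\alpha)$, so $U(\cdot\,;\alpha)$ solves \eqref{A.32a}.

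Third, uniqueness of the propagator in \eqref{A.33A}--\eqref{A.36a} forces $U(x;\alpha) = U(x,x_0;\alpha) U(x_0;\alpha)$ for every $x_0, x \in (a,b)$. Since each $U(x,x_0;\alpha)$ is boundedly invertible by the cocycle property \eqref{A.33C}, invertibility of $U(\cdot\,;\alpha)$ at a single point propagates to every point, and \eqref{A.72a} follows by inverting $U(x';\alpha)$. The condition $\|A(\cdot)\|_{\cB(\cH_1\oplus\cH_2)} \in L^1((a,b))$ (via Cauchy--Schwarz applied to $A = BC$) ensures the iteration \eqref{A.36a} converges uniformly on $[a,b]^2$, extending $U(\cdot\,,\cdot\,;\alpha)$ continuously there, and combined with the endpoint continuity from the first stage the identity \eqref{A.72a} extends to $x, x' \in \{a,b\}$. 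The equivalence of \eqref{A.38} and \eqref{A.39} then drops out: both say $U(\cdot\,;\alpha)$ is invertible at some (equivalently, every) point of $[a,b]$. I expect the main obstacle to be the algebraic verification in the second stage: the $-G_2$ in the definition of $B$ and the alternating signs in $H$ must telescope precisely against the signs in $U_{21}, U_{22}$ to reproduce the two Volterra equations, and any sign error in \eqref{A.37} would destroy the match.
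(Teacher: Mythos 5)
Your proof is correct and follows essentially the same route as the paper's: endpoint evaluation of \eqref{A.37} plus block-triangular invertibility, differentiation of the entries against the Volterra equations \eqref{A.35}--\eqref{A.36} to identify $U(\cdot\,;\alpha)$ as a solution of \eqref{A.32a}, and uniqueness of the propagator to spread invertibility from $a$ (or $b$) to all of $[a,b]$. The only real difference is that you rederive the equivalence of \eqref{A.38} and \eqref{A.39} internally from the propagator factorization (which is valid and self-contained), whereas the paper cites Theorem \ref{tA.4}\,$(ii)$; your pivotal identity $F_1(x)U_{1j}(x;\alpha)+F_2(x)U_{2j}(x;\alpha)=\widehat F_j(x;\alpha)$ is exactly the computation the paper leaves implicit, and it checks out.
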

\begin{proof}
Since 
\begin{equation}
U(a; \alpha)=\begin{pmatrix} I_{\cH_1}-\alpha \int_a^b dx'\, G_1(x') 
\widehat F_1(x'; \alpha ) & 0 \\ 
\alpha\int_a^b dx'\, G_2(x')\widehat F_1(x'; \alpha ) & I_{\cH_2} \end{pmatrix},     \lb{A.68a}  
\end{equation}
the operator $U(a; \alpha)$ is boundedly invertible in $\cH_1 \oplus \cH_2$ if and only if 
$\Big[I_{\cH_1}-\alpha \int_a^b dx'\, G_1(x') \widehat F_1(x'; \alpha )\Big]$ is boundedly invertible 
in $\cH_1$. (One recalls that 
a bounded $2 \times 2$ block operator 
$D = \left(\begin{smallmatrix} D_{1,1} & 0 \\ D_{2,1} & I_{\cH_2} \end{smallmatrix}\right)$ 
in $\cH_1 \oplus \cH_2$ is boundedly invertible if and only if $D_{1,1}$ is boundedly invertible in $\cH_1$, with 
$D^{-1} = \left(\begin{smallmatrix} D_{1,1}^{-1} & 0 \\ - D_{2,1} D_{1,1}^{-1}  & I_{\cH_2} 
\end{smallmatrix}\right)$ if $D$ is boundedly invertible.) Similarly, 
\begin{equation}
U(b; \alpha)=\begin{pmatrix} I_{\cH_1} & \alpha\int_a^b dx'\, G_1(x')\widehat F_2(x'; \alpha ) \\ 
0 & I_{\cH_2}-\alpha \int_a^b dx'\, G_2(x')\widehat F_2(x'; \alpha ) \end{pmatrix}     \lb{A.69a}  
\end{equation}
is boundedly invertible in $\cH_1 \oplus \cH_2$ if and only if 
$\Big[I_{\cH_2}-\alpha \int_a^b dx'\, G_2(x') \widehat F_2(x'; \alpha )\Big]$ is in $\cH_2$. (Again, one 
recalls that a bounded $2 \times 2$ block operator 
$E = \left(\begin{smallmatrix} I_{\cH_1} & E_{1,2} \\ 0 & E_{2,2} \end{smallmatrix}\right)$ 
in $\cH_1 \oplus \cH_2$ is boundedly invertible if and only if $E_{2,2}$ is boundedly invertible in $\cH_2$, with 
$E^{-1} = \left(\begin{smallmatrix} I_{\cH_1} & - E_{1,2} E_{2,2}^{-1} \\ 0 & E_{2,2}^{-1} 
\end{smallmatrix}\right)$ if $E$ is boundedly invertible.)

The equivalence of \eqref{A.38} and \eqref{A.39} has been settled in Theorem\ \ref{tA.4}\,$(ii)$. 

Next, differentiating the entries on the right-hand side of \eqref{A.37} with respect to $x$ and 
using the Volterra integral equations \eqref{A.35}, \eqref{A.36} yields  
\begin{equation}
(d/dx) U(x; \alpha) u = \alpha A(x) U(x; \alpha) u \, \text{ for a.e.\ $x \in (a,b)$}.  
\end{equation}
Thus, by uniqueness of the propagator $U(\cdot \, , \cdot \, ; \alpha)$, extended by norm 
continuity to $x=a$ (cf.\ Remark \ref{rA.5}), one obtains that 
\begin{equation}
U(x,a; \alpha) = U(x; \alpha) U(a; \alpha)^{-1}, \quad x \in (a,b). 
\end{equation}
Thus, $U(x; \alpha) = U(x,a; \alpha) U(a; \alpha)$ is boundedly invertible for all $x \in (a,b)$ since 
$U(x,a; \alpha)$, $x \in (a,b)$ is by construction (using norm continuity and the transitivity property 
in \eqref{A.33C}), and $U(a; \alpha)$ is boundedly invertible by hypothesis.  
Consequently, once more by uniqueness of the propagator $U(\cdot \, , \cdot \, ; \alpha)$, one 
obtains that  
\begin{equation}
U(x,x'; \alpha) = U(x; \alpha) U(x'; \alpha)^{-1}, \quad x, x' \in (a,b).   \lb{A.77a}
\end{equation}
Again by norm continuity, \eqref{A.77a} extends to $x, x' \in \{a, b\}$. 
\end{proof}

In the special case where $\cH$ and $\cH_j$, $j=1,2$, are finite-dimensional, the Volterra integral 
equations \eqref{A.35}, \eqref{A.36} and the operator $U$ in \eqref{A.37} were introduced 
in \cite{GM03}. 

\begin{lemma} \lb{lA.9} 
Let $\cH$ and $\cH'$ be complex, separable Hilbert spaces and $-\infty\leq a<b\leq \infty$. 
Suppose that for a.e.\ $x \in (a,b)$, $F (x) \in \cB_2(\cH',\cH)$ and $G(x) \in \cB_2(\cH,\cH')$ 
with $F(\cdot)$ and $G(\cdot)$ weakly measurable, and  
\begin{equation}  
\|F( \cdot)\|_{\cB_2(\cH',\cH)} \in L^2((a,b)), \; 
\|G (\cdot)\|_{\cB_2(\cH,\cH')} \in L^2((a,b)). \lb{A.79a}
\end{equation}
Consider the integral operator $\bsS$ in $L^2((a,b);\cH)$ with $\cB_1(\cH)$-valued 
separable integral kernel of the type 
\begin{equation}
S(x,x') = F(x) G(x') \, \text{ for a.e.\ $x, x' \in (a,b)$.}      \lb{A.80a} 
\end{equation}
Then 
\begin{equation}
\bsS \in \cB_1\big(L^2((a,b);\cH)\big).   \lb{A.81a}
\end{equation}
\end{lemma}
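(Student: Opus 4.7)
The plan is to factor $\bsS$ as a composition of two Hilbert--Schmidt operators, one going out of $L^2((a,b);\cH)$ into $\cH'$ and the other back, so that the ideal property $\cB_2 \cdot \cB_2 \subseteq \cB_1$ delivers the conclusion. Concretely, I introduce the two auxiliary operators
\begin{align*}
\bsS_1 &\colon L^2((a,b);\cH) \to \cH', \qquad \bsS_1 f = \int_a^b dx'\, G(x')f(x'), \\
\bsS_2 &\colon \cH' \to L^2((a,b);\cH), \qquad (\bsS_2 v)(x) = F(x)v,
\end{align*}
and check, using the separable kernel structure \eqref{A.80a}, that $\bsS = \bsS_2 \bsS_1$ as operators on $L^2((a,b);\cH)$.

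Next, I would verify the Hilbert--Schmidt membership of both factors by direct computation of the Hilbert--Schmidt norm against a product basis. For $\bsS_2$, pick orthonormal bases $\{e_n'\}$ of $\cH'$ and $\{\phi_k\}$ of $L^2((a,b))$; computing $\sum_n \|\bsS_2 e_n'\|_{L^2((a,b);\cH)}^2$ reduces to $\int_a^b dx \sum_n \|F(x)e_n'\|_\cH^2 = \int_a^b dx\, \|F(x)\|_{\cB_2(\cH',\cH)}^2$, which is finite by \eqref{A.79a}. For $\bsS_1$, use the basis $\{\phi_k \otimes e_n\}$ of $L^2((a,b);\cH)$ (with $\{e_n\}$ an ONB of $\cH$); the identity
\[
\sum_{k,n} \|\bsS_1(\phi_k e_n)\|_{\cH'}^2 = \sum_n \int_a^b dx'\, \|G(x')e_n\|_{\cH'}^2 = \int_a^b dx'\, \|G(x')\|_{\cB_2(\cH,\cH')}^2
\]
(the first equality being Parseval applied to $x' \mapsto G(x')e_n \in L^2((a,b);\cH')$) is again finite by \eqref{A.79a}.

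Once $\bsS_1 \in \cB_2(L^2((a,b);\cH),\cH')$ and $\bsS_2 \in \cB_2(\cH',L^2((a,b);\cH))$ are established, the trace ideal composition rule gives $\bsS = \bsS_2 \bsS_1 \in \cB_1(L^2((a,b);\cH))$, which is \eqref{A.81a}.

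The one point requiring care, and the most likely obstacle, is the measurability/Bochner-integrability bookkeeping: the hypotheses only provide weak measurability of $F(\cdot)$ and $G(\cdot)$. I would invoke Pettis' theorem (valid because $\cH$, $\cH'$ and the Hilbert--Schmidt ideals over them are separable) to upgrade weak measurability to strong measurability of $x' \mapsto G(x')f(x') \in \cH'$ and of $x \mapsto F(x)v \in \cH$, so that the Bochner integrals defining $\bsS_1$ and $\bsS_2$ are meaningful, and the dominated convergence arguments that exchange integration with inner products in the Parseval computations above are justified. With these measurability issues settled, the factorization and norm estimates go through routinely.
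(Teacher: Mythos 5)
Your proposal is correct and follows essentially the same route as the paper: the same factorization $\bsS = Q_F R_G$ through $\cH'$, the same Hilbert--Schmidt norm computations yielding $\int_a^b dx\,\|F(x)\|_{\cB_2(\cH',\cH)}^2$ and $\int_a^b dx'\,\|G(x')\|_{\cB_2(\cH,\cH')}^2$, the same appeal to Pettis' theorem, and the conclusion via $\cB_2\cdot\cB_2\subseteq\cB_1$. The only cosmetic difference is that the paper verifies the second factor by applying the first computation to the adjoint $R_G^* = Q_{G^*}$ rather than by your direct product-basis Parseval argument.
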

\begin{proof}
Since the Hilbert space of Hilbert--Schmidt operators, $\cB_2(\cH',\cH)$, is separable, weak 
measurability of $F(\cdot)$ implies $\cB_2(\cH',\cH)$-measurability 
by Pettis' theorem (cf., e.g., \cite[Theorem\ 1.1.1]{ABHN01}, \cite[Theorem\ II.1.2]{DU77}, 
\cite[3.5.3]{HP85}), and analogously for $G(\cdot)$. 

Next, one introduces (in analogy to \eqref{A.10}--\eqref{A.13}) the linear operators
\begin{align}
& Q_F \colon \cH' \mapsto L^2((a,b);\cH), \quad (Q_F w)(x)=F(x) w,
\quad w \in\cH', \lb{A.81A} \\
& R_G \colon L^2((a,b);\cH) \mapsto \cH', \quad (R_G f)=\int_a^b
dx'\,G(x')f(x'), \quad f\in L^2((a,b);\cH), \lb{A.81B}
\end{align} 
such that
\begin{equation}
\bsS = Q_F R_G.    \lb{A.81C} 
\end{equation}
Thus, with $\{v_n\}_{n\in\bbN}$ a complete orthonormal system in $\cH'$, using the monotone 
convergence theorem, one concludes that 
\begin{align}
& \|Q_F\|_{\cB_2(\cH', L^2((a,b);\cH))}^2 = \sum_{n\in\bbN} \|Q_F v_n\|_{L^2((a,b);\cH)}^2  \no \\
& \quad = \sum_{n\in\bbN} \int_a^b dx \, \|F(x) v_n\|_{\cH}^2 
= \int_a^b dx \, \sum_{n\in\bbN} (v_n, F(x)^* F(x) v_n)_{\cH'}    \no \\
& \quad = \int_a^b dx \, {\tr}_{\cH'} \big(F(x)^* F(x)\big) = \int_a^b dx \, \big\|F(x)^* F(x)\big\|_{\cB_1(\cH')} 
\no \\ 
& \quad = \int_a^b dx \, \|F(x)\|_{\cB_2(\cH', \cH)}^2 < \infty.     \lb{A.83a} 
\end{align}
The same argument applied to $R_G^*$ (which is of the form $Q_{G^*}$, i.e., given by 
\eqref{A.81A} with $F(\cdot)$ replaced by $G(\cdot)^*$) then proves 
$R_G^* \in \cB_2(\cH', L^2((a,b);\cH))$. Hence,  
\begin{equation}
Q_F \in \cB_2(\cH', L^2((a,b);\cH)), \quad R_G \in \cB_2(L^2((a,b);\cH), \cH'),  
\end{equation}
together with the factorization \eqref{A.81C}, prove \eqref{A.81a}.
\end{proof}

Next, we strengthen our assumptions as follows: 

\begin{hypothesis} \lb{hA.10}  
Let $\cH$ and $\cH_j$, $j=1,2$, be complex, separable Hilbert spaces and $-\infty\leq a<b\leq \infty$. 
Suppose that for a.e.\ $x \in (a,b)$, $F_j (x) \in \cB_2(\cH_j,\cH)$ and $G_j(x) \in \cB_2(\cH,\cH_j)$ 
such that $F_j(\cdot)$ and $G_j(\cdot)$ are weakly measurable, and 
\begin{equation}  
\|F_j( \cdot)\|_{\cB_2(\cH_j,\cH)} \in L^2((a,b)), \; 
\|G_j (\cdot)\|_{\cB_2(\cH,\cH_j)} \in L^2((a,b)), \quad j=1,2.      \lb{A.78a}
\end{equation}
\end{hypothesis}

As an immediate consequence of Hypothesis \ref{hA.10} one infers the following facts.

\begin{lemma} \lb{lA.11}
Assume Hypothesis \ref{hA.10} and $\alpha \in \bbC$. Then, for a.e.\ $x \in (a,b)$, 
$\widehat F_j (x; \alpha) \in \cB_2(\cH_j,\cH)$, $\widehat F_j(\cdot\, ; \alpha)$ are 
$\cB_2(\cH_j,\cH)$-measurable, and 
\begin{equation}  
\big\|\widehat F_j( \cdot \, ; \alpha)\big\|_{\cB_2(\cH_j,\cH)} \in L^2((a,b)), \quad j=1,2.     \lb{A.84a}
\end{equation}
Moreover, 
\begin{align}
\begin{split} 
\int_c^d dx \, G_j (x) F_k(x), \, 
\int_c^d dx \, G_j (x) \hatt F_k(x; \alpha) \in \cB_1(\cH_k,\cH_j),&    \\
1 \leq j, k \leq 2, \; 
c, d \in (a,b) \cup \{a, b\},&      \lb{A.85a} 
\end{split} 
\end{align}
and 
\begin{align}
& QR, ST \in \cB_1\big(L^2((a,b);\cH)\big),    \lb{A.86b} \\
& \bsK, \bsH_a, \bsH_b \in \cB_2\big(L^2((a,b);\cH)\big).     \lb{A.86a} 
\end{align} 
Moreover,
\begin{equation}\lb{A.86c}
\begin{split}
\tr_{L^2((a,b);\cH)}(QR) = \tr_{\cH_2}(RQ) &= \int_a^b dx\, \tr_{\cH_2}(G_2(x)F_2(x)) \\
&= \int_a^bdx\, \tr_{\cH}(F_2(x)G_2(x)),
\end{split}
\end{equation}
and
\begin{equation}\lb{A.86d}
\begin{split}
\tr_{L^2((a,b);\cH)}(ST) = \tr_{\cH_1}(TS)&=\int_a^bdx\, \tr_{\cH_1}(G_1(x)F_1(x))\\
&=\int_a^bdx\, \tr_{\cH_1}(G_1(x)F_1(x)).
\end{split}
\end{equation}
\end{lemma}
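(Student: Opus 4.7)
The plan is to upgrade, in four steps, the bounded-operator-valued statements of Lemmas \ref{lA.6} and \ref{lA.9} to the trace-ideal setting dictated by Hypothesis \ref{hA.10}. First I would note that Hypothesis \ref{hA.10} already implies Hypothesis \ref{hA.2}: by Pettis' theorem (applicable since $\cB_2$ is separable), weak $\cB_2$-measurability lifts to $\cB_2$-strong measurability, and since $\|\cdot\|_{\cB} \leq \|\cdot\|_{\cB_2}$, this descends to uniform measurability of $F_j, G_j$ as $\cB$-valued maps. Consequently Lemma \ref{lA.6} applies and provides $\widehat F_j(\,\cdot\,;\alpha)$ with a uniform-operator-norm bound controlled by $\max_k \|F_k(\,\cdot\,)\|_{\cB(\cH_k,\cH)}$.

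For \eqref{A.84a} I would revisit the Neumann iteration in the proof of Lemma \ref{lA.6}, but estimate the iterates $\widehat F_{j,n}(x;\alpha)$ in the $\cB_2$-norm instead of the operator norm. The key observation is the one-sided ideal bound
\begin{equation*}
\|F_k(x) G_k(x') \widehat F_{j,n-1}(x';\alpha)\|_{\cB_2(\cH_j,\cH)} \leq \|F_k(x)\|_{\cB_2(\cH_k,\cH)} \|G_k(x')\|_{\cB(\cH,\cH_k)} \|\widehat F_{j,n-1}(x';\alpha)\|_{\cB(\cH_j,\cH)},
\end{equation*}
which loads all of the Hilbert--Schmidt weight onto the fixed factor $F_k(x)$ (producing an $L^2$-integrable prefactor in $x$), while everything under the $x'$-integral is controlled by the pointwise $\cB$-norm bound already furnished by Lemma \ref{lA.6} together with $\|G_k\|_{\cB} \leq \|G_k\|_{\cB_2}$. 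Summing the iteration yields $\widehat F_j(x;\alpha) \in \cB_2(\cH_j,\cH)$ for a.e.\ $x$ and the $L^2$-integrability of $\|\widehat F_j(\,\cdot\,;\alpha)\|_{\cB_2}$, and $\cB_2$-measurability follows once more from Pettis' theorem as in the proof of Lemma \ref{lA.9}.

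Claims \eqref{A.85a} and \eqref{A.86a}, \eqref{A.86b} are then essentially immediate consequences of the $\cB_2$-ideal calculus. The bound $\|G_j(x) F_k(x)\|_{\cB_1} \leq \|G_j(x)\|_{\cB_2} \|F_k(x)\|_{\cB_2}$ and Cauchy--Schwarz provide Bochner integrability in $\cB_1(\cH_k,\cH_j)$; the identical estimate with $F_k$ replaced by $\widehat F_k(\,\cdot\,;\alpha)$ (now using \eqref{A.84a}) handles the second family. For the operator-theoretic statements, the computation (A.83) of Lemma \ref{lA.9} already yields $Q, R, S, T \in \cB_2$ with squared Hilbert--Schmidt norms equal to $\int_a^b \|F_2\|_{\cB_2}^2\,dx$, $\int_a^b \|G_2\|_{\cB_2}^2\,dx$, and their counterparts for $F_1, G_1$, so $QR$ and $ST$ lie in $\cB_1(L^2((a,b);\cH))$. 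For $\bsK \in \cB_2$ I would apply the Hilbert--Schmidt criterion for $\cH$-valued $L^2$-kernels, verifying $\int\!\int \|K(x,x')\|_{\cB_2(\cH)}^2\,dx\,dx' < \infty$ from the pointwise bound $\|K(x,x')\|_{\cB_2} \leq \|F_j(x)\|_{\cB_2}\|G_j(x')\|_{\cB_2}$ on each triangle; then $\bsH_a = \bsK - QR$ and $\bsH_b = \bsK - ST$ inherit $\cB_2$-membership from \eqref{A.14}, \eqref{A.15}.

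Finally, \eqref{A.86c} follows from cyclicity of the trace for Hilbert--Schmidt factors, giving $\tr_{L^2((a,b);\cH)}(QR) = \tr_{\cH_2}(RQ)$, combined with the direct computation $RQ = \int_a^b G_2(x)F_2(x)\,dx$, viewed as a Bochner integral in $\cB_1(\cH_2)$ thanks to \eqref{A.85a}; continuity of $\tr_{\cH_2}$ on $\cB_1(\cH_2)$ allows trace and integral to be exchanged, and a second application of cyclicity (valid since $F_2(x), G_2(x) \in \cB_2$) gives the pointwise identity $\tr_{\cH_2}(G_2(x) F_2(x)) = \tr_{\cH}(F_2(x) G_2(x))$. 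Identity \eqref{A.86d} is proved identically with the index $2$ replaced by $1$. The main obstacle in this plan is the $\cB_2$-norm iteration of the second paragraph: one must take care to place the Hilbert--Schmidt weight on whichever of $F_k$ or $G_k$ sits outside the $x'$-integral, so that the pointwise operator-norm bound already supplied by Lemma \ref{lA.6} closes the induction without any circular dependence on a $\cB_2$-norm estimate for $\widehat F_{j,n-1}$ itself.
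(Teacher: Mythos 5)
Your proposal is correct and follows essentially the same route as the paper: Pettis' theorem for measurability, the $\cB_2$-norm upgrade of the Volterra iteration from Lemma \ref{lA.6}, the Hilbert--Schmidt factorizations $QR$, $ST$ from Lemma \ref{lA.9} for the $\cB_1$-claims, the $\cB_2(\cH)$-valued kernel criterion for $\bsK,\bsH_a,\bsH_b$, and cyclicity of the trace for \eqref{A.86c}, \eqref{A.86d}. The only cosmetic differences are your careful placement of the Hilbert--Schmidt weight on the outer factor in the iteration (the paper simply replaces all operator norms by $\cB_2$-norms, which also closes since $\|AB\|_{\cB_2}\leq\|A\|_{\cB_2}\|B\|_{\cB}$) and your use of trace-continuity on Bochner integrals where the paper computes traces directly with an orthonormal basis.
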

\begin{proof}
As in the proof of Lemma \ref{lA.9}, one concludes that weak 
measurability of $\widehat F_j(\cdot\, ; \alpha)$, $j=1,2$, implies their $\cB_2(\cH_j,\cH)$-measurability 
by Pettis' theorem. The properties concerning $\widehat F_j( \cdot \, ; \alpha)$, $j=1,2$, then 
follow as in the proof of Lemma \ref{lA.6}, systematically replacing $\| \cdot \|_{\cB(\cH_j,\cH)}$ by 
$\| \cdot \|_{\cB_2(\cH_j,\cH)}$, $j=1,2$. 

Applying Lemma \ref{lA.9}, relations \eqref{A.85a} are now an immediate consequence 
of Hypothesis \ref{hA.10} and the fact that 
\begin{equation}
\big\|G_j (\cdot) \hatt F_k(\cdot \, ; \alpha)\big\|_{\cB_1(\cH_k,\cH_j)} \in L^1((a,b)), \quad 
1 \leq j, k \leq 2.     \lb{A.87a} 
\end{equation}

The proof of Lemma \ref{lA.9} (see \eqref{A.83a}) yields
\begin{align}
\begin{split} 
S \in \cB_2(\cH_1, L^2((a,b);\cH)), \, Q \in \cB_2(\cH_2, L^2((a,b);\cH)), \\  
T \in \cB_2(L^2((a,b);\cH), \cH_1), \, R \in \cB_2(L^2((a,b);\cH), \cH_2),   \lb{A.87A} 
\end{split} 
\end{align}
and \eqref{A.86b} follows.

Next, for any integral operator $\bsT$ in $L^2((a,b);\cH)$, with integral 
kernel satisfying $\|T(\cdot \, , \cdot \,)\|_{\cB_2(\cH)} \in L^2 ((a,b) \times (a,b); d^2x)$, one 
infers (cf.\ \cite[Theorem\ 11.6]{BS87}) that $\bsT \in \cB_2\big(L^2((a,b);\cH)\big)$ and 
\begin{equation}
\|\bsT\|_{\cB_2(L^2((a,b);\cH))} = 
\bigg(\int_a^b dx \int_a^b dx' \, \|T(x,x')\|_{\cB_2(\cH)}^2\bigg)^{1/2}.    \lb{A.82a} 
\end{equation}

Given Lemma \ref{lA.9} and the fact \eqref{A.82a}, one readily concludes \eqref{A.86a}.

Finally, the first equality in both \eqref{A.86c} (resp., \eqref{A.86d}) follows from cyclicity of the trace.  The other equalities throughout \eqref{A.86c} and \eqref{A.86d} follow from computing appropriate traces.  For example, taking an orthonormal basis $\{v_n\}_{n\in \bbN}$ in $\cH_2$, one computes
\begin{align}
&\tr_{\cH_2}(RQ)=\sum_{n\in \bbN} (v_n,RQv_n)_{\cH_2} = \sum_{n\in \bbN} \bigg(v_n, \int_a^bdx\, G_2(x)(Qv_n)(x)\bigg)_{\cH_2}\no\\
&\quad= \int_a^bdx\, \sum_{n\in \bbN} (v_n, G_2(x)(Qv_n)(x))_{\cH_2}= \int_a^bdx\, \sum_{n\in \bbN} (v_n,G_2(x)F_2(x)v_n)_{\cH_2}\no\\
&\quad= \int_a^bdx\, \tr_{\cH_2}(G_2(x)F_2(x)).\lb{A.82b}
\end{align}
\end{proof}

In the following we use many of the standard properties of Fredholm determinants, $2$-modified Fredholm determinants, and traces.  For the Fredholm determinant and trace,
\begin{equation}\lb{2.98z}
{\det}_{\cK}(I_{\cK}-A)=\prod_{n\in \cJ}(1-\lambda_n(A)),\quad A\in \cB_1(\cK),
\end{equation}
where $\{\lambda_n(A)\}_{n\in \cJ}$ is an enumeration of the non-zero eigenvalues of $A$, listed in non-increasing order according to their moduli, and $\cJ\subseteq \bbN$ is an appropriate indexing set. 
\begin{align}
& {\det}_{\cK}((I_\cK-A)(I_\cK-B))={\det}_{\cK}(I_\cK-A){\det}_{\cK}(I_\cK-B), \quad A, B
\in\cB_1(\cK), \lb{A.92a} \\   
& {\det}_{\cK}(I_{\cK}-AB)={\det}_{\cK'}(I_{\cK'}-BA), \quad {\tr}_{\cK} (AB) = {\tr}_{\cK'} (BA)   \lb{A.93a} \\
& \quad \text{ for all $A\in\cB_1(\cK',\cK)$, $B\in\cB(\cK,\cK')$ 
such that $AB\in\cB_1(\cK)$, $BA\in \cB_1(\cK')$,} \no
\intertext{and}
& {\det}_{\cK}(I_\cK-A)={\det}_{\cK_2}(I_{\cK_2}-D) \, \text{ for } \, A=\begin{pmatrix} 
0 & C \\ 0 & D \end{pmatrix}, \;\, D \in \cB_1(\cK_2), \; \cK=\cK_1\dotplus \cK_2, \lb{A.94a} 
\intertext{since}
&I_\cH -A=\begin{pmatrix} I_{\cK_1} & -C \\ 0 & I_{\cK_2}-D \end{pmatrix} =
\begin{pmatrix} I_{\cK_1} & 0 \\ 0 & I_{\cK_2}-D \end{pmatrix} 
\begin{pmatrix} I_{\cK_1} & -C \\ 0 & I_{\cK_2} \end{pmatrix}. \lb{A.95a} 
\end{align}

For $2$-modified Fredholm determinants,
\begin{equation}\lb{Z2.117}
{\det}_{2,\cK}(I_{\cK}-A) = \prod_{n\in\cJ}(1-\lambda_n(A))e^{\lambda_n(A)},\quad A\in \cB_2(\cK),
\end{equation}
where $\{\lambda_n(A)\}_{n\in \cJ}$ is an enumeration of the non-zero eigenvalues of $A$, listed in non-increasing order according to their moduli, and $\cJ\subseteq \bbN$ is an appropriate indexing set,
\begin{align}
& {\det}_{2,\cK}(I_{\cK}-A)= {\det}_{\cK}((I_{\cK}-A)\exp(A)), \quad A\in\cB_2 (\cK), \lb{Z3.21} \\
& {\det}_{2,\cK}((I_{\cK}-A)(I_{\cK}-B))={\det}_{2,\cK}(I_{\cK}-A){\det}_{2,\cK}(I_{\cK}-B)e^{-\tr_{\cK}(AB)},\lb{Z3.22}\\
&\hspace*{8.5cm}A, B\in\cB_2(\cK), \no \\
& {\det}_{2,\cK}(I_{\cK}-A)={\det}_{\cK}(I_{\cK}-A)e^{\tr_{\cK}(A)}, \quad A\in\cB_1(\cK). \lb{Z3.23}
\end{align}
Here $\cK$, $\cK'$, and $\cK_j$, $j=1,2$, are complex, separable Hilbert spaces,
$\cB(\cK)$ denotes the set of bounded linear operators on $\cK$, $\cB_p(\cK)$,
$p\geq 1$, denote the usual trace ideals of $\cB(\cK)$, and $I_\cK$ denotes
the identity operator in $\cK$. Moreover, ${\det}_{\cK}(I_\cK-A)$, $A\in\cB_1(\cK)$,
denotes the standard Fredholm determinant, with $\tr_{\cK}(A)$, $A\in\cB_1(\cK)$, the
corresponding trace, and $\det_{2,\cK}(I_{\cK}-A)$ the $2$-modified Fredholm determinant of a Hilbert--Schmidt operator $A\in \cB_2(\cK)$. Finally, $\dotplus$ in \eqref{A.94a} denotes a
direct, but not necessary orthogonal, sum decomposition of $\cK$ into $\cK_1$
and $\cK_2$. (We refer, e.g., to \cite{GGK96}, \cite{GGK97}, \cite[Ch.\ XIII]{GGK00}, 
\cite[Sects.\ IV.1~\&~IV.2]{GK69}, \cite[Ch.\ 17]{RS78}, \cite{Si77}, \cite[Ch.\ 3]{Si05} 
for these facts). 

\begin{theorem}\lb{tA.12}
Assume Hypothesis \ref{hA.10} and let $\alpha \in \bbC$.  Then
\begin{equation}\lb{Z2.121}
{\det}_{2,L^2((a,b);\cH)}(\bsI - \alpha \bsH_a) = {\det}_{2,L^2((a,b);\cH)}(\bsI - \alpha \bsH_b) = 1.
\end{equation}
Assume, in addition, that $U$ is given by \eqref{A.37}.  Then
\begin{align}
&{\det}_{2,L^2((a,b);\cH)}(\bsI - \alpha \bsK)   \no\\[1mm]
&\quad = {\det}_{\cH_1}\big(I_{\cH_1}-\alpha T(\bsI - \alpha \bsH_b)^{-1}S\big)
\exp\big(\alpha \tr_{L^2((a,b);\cH)}(ST)\big)\lb{Z2.122}\\[1mm]
&\quad = {\det}_{\cH_1}\bigg(I_{\cH_1}-\alpha \int_a^bdx\, G_1(x)\widehat F_1(x,\alpha)\bigg)
\exp\bigg(\alpha \int_a^bdx\, \tr_{\cH}(F_1(x)G_1(x))\bigg)\lb{Z2.123}\\[1mm]
&\quad = {\det}_{\cH_1\oplus \cH_2}(U(a,\alpha))
\exp\bigg(\alpha \int_a^bdx\, \tr_{\cH}(F_1(x)G_1(x))\bigg)\lb{Z2.124}   \\[1mm] 
&\quad = {\det}_{\cH_2}\big(I_{\cH_2} - \alpha R(\bsI - \alpha \bsH_a)^{-1}Q\big)
\exp\big(\alpha \tr_{L^2((a,b);\cH)}(QR) \big)\lb{Z2.125}\\[1mm] 
&\quad = {\det}_{\cH_2}\bigg(I_{\cH_2} - \alpha \int_a^bdx\, G_2(x)\widehat F_2(x,\alpha)\bigg)
\exp\bigg(\alpha \int_a^bdx\, \tr_{\cH}(F_2(x)G_2(x))\bigg)\lb{Z2.126}\\[1mm]
&\quad = {\det}_{\cH_1\oplus\cH_2}(U(b;\alpha))
\exp\bigg(\alpha \int_a^bdx\, \tr_{\cH}(F_2(x)G_2(x))\bigg).\lb{Z2.127}
\end{align}
\end{theorem}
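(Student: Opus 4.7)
The plan is to split the identities into three blocks: first \eqref{Z2.121}, then the ``analytic'' reductions \eqref{Z2.122}, \eqref{Z2.125} that come from a factorization of $\bsI-\alpha\bsK$, and finally the ``algebraic'' reductions \eqref{Z2.123}, \eqref{Z2.124}, \eqref{Z2.126}, \eqref{Z2.127} that are essentially identifications of the intermediate quantities.

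For \eqref{Z2.121}, I combine Lemma \ref{lA.11} (which gives $\bsH_a,\bsH_b\in\cB_2(L^2((a,b);\cH))$) with Lemma \ref{lA.2a} (which gives $\sigma(\bsH_a)=\sigma(\bsH_b)=\{0\}$). The Weyl product expansion \eqref{Z2.117} of $\det_2$ runs only over non-zero eigenvalues, of which there are none, so both modified determinants equal $1$.

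For the main reduction \eqref{Z2.125}, I use the decomposition $\bsK=\bsH_a+QR$ from \eqref{A.14} together with the bounded invertibility of $\bsI-\alpha\bsH_a$ from Theorem \ref{tA.4}$(i)$ to factor
\[
\bsI - \alpha\bsK = (\bsI - \alpha\bsH_a)\bigl[\bsI - \alpha(\bsI - \alpha\bsH_a)^{-1} QR\bigr].
\]
Since $QR\in\cB_1$ (Lemma \ref{lA.11}), both factors are of the form $\bsI-\cB_2$, so \eqref{Z3.22} applies and produces a product of two $\det_2$'s together with $e^{-\alpha^2\tr(\bsH_a(\bsI-\alpha\bsH_a)^{-1}QR)}$; the first $\det_2$ is $1$ by the previous paragraph. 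Rewriting the second $\det_2$ (whose argument is trace class) via \eqref{Z3.23} yields an additional factor $e^{\alpha\tr((\bsI-\alpha\bsH_a)^{-1}QR)}$, and the two exponents collapse, by linearity of the trace and the algebraic identity $(\bsI-\alpha\bsH_a)(\bsI-\alpha\bsH_a)^{-1}QR=QR$, to $\alpha\tr_{L^2}(QR)$. Finally, cyclicity \eqref{A.93a} rewrites the ordinary Fredholm determinant over $L^2((a,b);\cH)$ as a determinant over $\cH_2$, producing \eqref{Z2.125}. Identity \eqref{Z2.122} follows by the same argument applied to the alternative decomposition $\bsK=\bsH_b+ST$ from \eqref{A.15}.

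The identifications \eqref{Z2.123} and \eqref{Z2.126} reduce to recognizing that the Volterra equations \eqref{A.35}, \eqref{A.36} read $(\bsI-\alpha\bsH_b)\widehat F_1(\cdot;\alpha)v = F_1(\cdot)v$ for $v\in\cH_1$ and $(\bsI-\alpha\bsH_a)\widehat F_2(\cdot;\alpha)w = F_2(\cdot)w$ for $w\in\cH_2$; consequently $R(\bsI-\alpha\bsH_a)^{-1}Q = \int_a^b dx\, G_2(x)\widehat F_2(x;\alpha)$ and $T(\bsI-\alpha\bsH_b)^{-1}S = \int_a^b dx\, G_1(x)\widehat F_1(x;\alpha)$, while \eqref{A.86c}--\eqref{A.86d} convert the exponential prefactors to $\alpha\int_a^b \tr_{\cH}(F_j(x)G_j(x))\,dx$. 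For \eqref{Z2.124} and \eqref{Z2.127}, I invoke the explicit formulas \eqref{A.68a}--\eqref{A.69a}: $U(a;\alpha)$ is block lower-triangular with lower-right block $I_{\cH_2}$ and $U(b;\alpha)$ is block upper-triangular with upper-left block $I_{\cH_1}$; the block-triangular determinant identity of the type \eqref{A.94a}--\eqref{A.95a} then collapses the $\cH_1\oplus\cH_2$ determinant to the $\cH_1$ (resp.\ $\cH_2$) determinant already identified in \eqref{Z2.123} (resp.\ \eqref{Z2.126}).

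The only delicate step is the exponent bookkeeping in the factorization argument: one must verify that the \eqref{Z3.22}-correction $-\alpha^2\tr(\bsH_a(\bsI-\alpha\bsH_a)^{-1}QR)$ and the \eqref{Z3.23}-correction $+\alpha\tr((\bsI-\alpha\bsH_a)^{-1}QR)$ combine cleanly to $\alpha\tr_{L^2}(QR)$, using the resolvent identity above. Everything else is a direct appeal to the structural lemmas already in place and to the standard determinant calculus \eqref{2.98z}--\eqref{Z3.23}.
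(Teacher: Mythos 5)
Your proposal is correct and follows essentially the same route as the paper: quasi-nilpotency of $\bsH_a$, $\bsH_b$ together with \eqref{Z2.117} for \eqref{Z2.121}, the factorizations \eqref{A.107a}--\eqref{A.108a} combined with \eqref{Z3.22}, \eqref{Z3.23}, and cyclicity \eqref{A.93a} for \eqref{Z2.122} and \eqref{Z2.125}, and the Volterra/block-triangular identifications for the remaining equalities. Your explicit exponent bookkeeping and the observation that $(\bsI-\alpha\bsH_b)\widehat F_1(\cdot;\alpha)v=F_1(\cdot)v$ merely spell out steps the paper leaves implicit.
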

\begin{proof}
Since $\bsH_a$ and $\bsH_b$ are quasi-nilpotent, they have no non-zero eigenvalues.  Therefore, \eqref{Z2.121} follows from the representation of the $2$-modified Fredholm determinant given in \eqref{Z2.117}.

Next, one observes
\begin{align}
\bsI-\alpha \bsK &=(\bsI-\alpha \bsH_a)[\bsI-\alpha (\bsI-\alpha \bsH_a)^{-1}QR] \lb{A.107a} \\ 
&=(\bsI-\alpha \bsH_b)[\bsI-\alpha (\bsI-\alpha \bsH_b)^{-1}ST]. \lb{A.108a}
\end{align}
Using the various properties of determinants given in \eqref{Z3.21}--\eqref{Z3.23} and \eqref{A.108a}, one computes
\begin{align}
&{\det}_{2,L^2((a,b);\cH)}(\bsI - \alpha \bsK)\no\\
&\quad = {\det}_{2,L^2((a,b);\cH)}\big((\bsI - \alpha \bsH_b)
\big[\bsI - \alpha (\bsI - \bsH_b)^{-1}ST\big]\big)\no\\
&\quad = {\det}_{2,L^2((a,b);\cH)}(\bsI-\alpha \bsH_b) \, {\det}_{2,L^2((a,b);\cH)}\big(\bsI-\alpha (\bsI-\bsH_b)^{-1}ST\big)\no\\
&\qquad \times \exp\big(-\tr_{L^2((a,b);\cH)}\big(\alpha^2\bsH_b(\bsI - \bsH_b)^{-1}ST\big)\big)\no\\
&\quad = {\det}_{L^2((a,b);\cH)}\big(\bsI - \alpha (\bsI - \bsH_b)^{-1}ST\big)\exp\big(\tr_{L^2((a,b);\cH)}\big(\alpha (\bsI - \bsH_b)^{-1}ST\big) \big)\no\\
&\qquad \times\exp\big(-\tr_{L^2((a,b);\cH)}\big(\alpha^2\bsH_b(\bsI - \bsH_b)^{-1}ST\big)\big)\lb{Z2.128}\\
&\quad = {\det}_{L^2((a,b);\cH)}\big(\bsI - \alpha (\bsI - \bsH_b)^{-1}ST\big)\exp\big(\alpha\tr_{L^2((a,b);\cH)}(ST) \big)\no\\
&\quad = {\det}_{\cH_1}\big(I_{\cH_1} - \alpha T(\bsI - \bsH_b)^{-1}S\big)\exp\big(\alpha \tr_{L^2((a,b);\cH)}(ST) \big)\lb{Z2.129}\\
&\quad = {\det}_{\cH_1}\bigg( I_{\cH_1} -\alpha \int_a^bdx\, G_1(x)\widehat F_1(x;\alpha)\bigg)\exp\bigg(\alpha \int_a^bdx\, \tr_{\cH}(F_1(x)G_1(x)) \bigg)\no\\
&\quad = {\det}_{\cH_1\oplus \cH_2}(U(a;\alpha))\exp\bigg(\alpha \int_a^bdx\, \tr_{\cH}(F_1(x)G_1(x)) \bigg).\no
\end{align}
In the above calculation, \eqref{Z2.128} is an application of \eqref{Z3.23}, noting that $ST\in \cB_1(L^2((a,b);\cH))$ by Lemma \ref{lA.11}, while \eqref{Z2.129} makes use of the determinant property in \eqref{A.93a}.

To prove ${\det}_{2,L^2((a,b);\cH)}(\bsI - \alpha \bsK)$ coincides with the expressions in \eqref{Z2.125}--\eqref{Z2.127}, we apply \eqref{A.107a} and carry out the analogous computation,
\begin{align}
&{\det}_{2,L^2((a,b);\cH)}(\bsI - \alpha \bsK)\no\\
&\quad = {\det}_{2,L^2((a,b);\cH)}\big((\bsI - \alpha \bsH_a)\big[\bsI - \alpha (\bsI - \bsH_a)^{-1}QR\big]\big)\no\\
&\quad = {\det}_{2,L^2((a,b);\cH)}(\bsI-\alpha \bsH_a) \, 
{\det}_{2,L^2((a,b);\cH)}\big(\bsI-\alpha (\bsI-\bsH_a)^{-1}QR\big)\no\\
&\qquad \times \exp\big(-\tr_{L^2((a,b);\cH)}\big(\alpha^2\bsH_a(\bsI - \bsH_a)^{-1}QR\big)\big)\no\\
&\quad = {\det}_{L^2((a,b);\cH)}\big(\bsI - \alpha (\bsI - \bsH_a)^{-1}QR\big)\exp\big(\tr_{L^2((a,b);\cH)}\big(\alpha (\bsI - \bsH_a)^{-1}QR\big)\big)\no\\
&\qquad \times\exp\big(-\tr_{L^2((a,b);\cH)}\big(\alpha^2\bsH_a(\bsI - \bsH_a)^{-1}QR\big)\big)\no\\
&\quad = {\det}_{L^2((a,b);\cH)}\big(\bsI - \alpha (\bsI - \bsH_a)^{-1}QR\big)\exp\big(\alpha\tr_{L^2((a,b);\cH)}(QR) \big)\no\\
&\quad = {\det}_{\cH_2}\big(I_{\cH_2} - \alpha R(\bsI - \bsH_a)^{-1}Q\big)\exp\big(\alpha \tr_{L^2((a,b);\cH)}(QR) \big)\no\\
&\quad = {\det}_{\cH_2}\bigg( I_{\cH_2} -\alpha \int_a^bdx\, G_2(x)\widehat F_2(x;\alpha)\bigg)\exp\bigg(\alpha \int_a^bdx\, \tr_{\cH}(F_2(x)G_2(x)) \bigg)\no\\
&\quad = {\det}_{\cH_1\oplus \cH_2}(U(b;\alpha))\exp\bigg(\alpha \int_a^bdx\, \tr_{\cH}(F_2(x)G_2(x)) \bigg).\lb{Z2.130}
\end{align}
\end{proof}

As a consequence of Theorem \ref{tA.12}, we recover the following result in the case where $\bsK$ is trace class $\bsK\in \cB_1(L^2((a,b);\cH))$, not just $\bsK\in \cB_2(L^2((a,b);\cH))$, first proved in \cite{CGPST13}.
\begin{corollary} [\cite{CGPST13}] \lb{cA.13} 
Assume Hypothesis \ref{hA.10}, let $\alpha\in\bbC$, and suppose that $\bsK$ belongs to the trace class, $\bsK\in \cB_1(L^2((a,b);\cH))$. Then, $\bsH_a,\bsH_b\in \cB_1(L^2((a,b);\cH))$ and
\begin{align} 
&{\tr}_{L^2((a,b);\cH)}(\bsH_a)={\tr}_{L^2((a,b);\cH)}(\bsH_b)=0,     \lb{A.96a} \\ 
&{\det}_{L^2((a,b);\cH)}(\bsI-\alpha \bsH_a)
= {\det}_{L^2((a,b);\cH)}(\bsI-\alpha \bsH_b)=1, 
\lb{A.97a} \\ 
&\tr_{L^2((a,b);\cH)}(\bsK)=\int_a^b dx\,\tr_{\cH_1}(G_1(x)F_1(x))
=\int_a^b dx\,\tr_{\cH}(F_1(x)G_1(x)) \lb{A.98a} \\
& \quad =\int_a^b dx\,\tr_{\cH_2}(G_2(x)F_2(x))
=\int_a^b dx\,\tr_{\cH}(F_2(x)G_2(x)). \lb{A.99a} 
\end{align}
Assume in addition that $U$ is given by \eqref{A.37}. Then,
\begin{align}
&{\det}_{L^2((a,b);\cH)}(\bsI-\alpha \bsK)={\det}_{\cH_1}\big(I_{\cH_1}
-\alpha T(\bsI - \alpha \bsH_b)^{-1}S\big)   \lb{A.100a} \\ 
& \quad ={\det}_{\cH_1}\bigg(I_{\cH_1}-\alpha \int_a^b dx\,
G_1(x)\widehat F_1(x; \alpha )\bigg)    \lb{A.101a} \\
& \quad ={\det}_{\cH_1 \oplus \cH_2}(U(a; \alpha)) \lb{A.102a} \\
& \quad ={\det}_{\cH_2}\big(I_{\cH_2}-\alpha R(\bsI - \alpha \bsH_a)^{-1}Q\big)
\lb{A.103a} \\
& \quad ={\det}_{\cH_2}\bigg(I_{\cH_2}-\alpha \int_a^b dx\,
G_2(x)\widehat F_2(x; \alpha )\bigg) \lb{A.104a} \\
& \quad ={\det}_{\cH_1 \oplus \cH_2}(U(b; \alpha)).    \lb{A.105a} 
\end{align}
\end{corollary}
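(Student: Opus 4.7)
The plan is to derive Corollary \ref{cA.13} directly from Theorem \ref{tA.12} by converting each $2$-modified Fredholm determinant into a standard Fredholm determinant via the identity \eqref{Z3.23}, and showing that the exponential prefactors produced cancel exactly those already appearing in Theorem \ref{tA.12}. No new analytic input is required beyond the assumption $\bsK \in \cB_1$; the role of this assumption is to push us from the $\cB_2$-theory into the $\cB_1$-theory and thereby replace $\det_2$ by $\det$.

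First, I would establish \eqref{A.96a}-\eqref{A.97a}. By Lemma \ref{lA.11} we already have $QR, ST \in \cB_1(L^2((a,b);\cH))$ under Hypothesis \ref{hA.10} alone, and since $\bsK \in \cB_1$ is now assumed, the decompositions \eqref{A.14}-\eqref{A.15} force $\bsH_a = \bsK - QR$ and $\bsH_b = \bsK - ST$ to lie in $\cB_1(L^2((a,b);\cH))$. By Lemma \ref{lA.2a} these operators are quasi-nilpotent, so Lidskii's theorem gives that their traces vanish (yielding \eqref{A.96a}), and by the eigenvalue product \eqref{2.98z} their Fredholm determinants equal $1$ (yielding \eqref{A.97a}). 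The trace formulas \eqref{A.98a}-\eqref{A.99a} then follow from cyclicity of the trace \eqref{A.93a}: one has $\tr_{L^2((a,b);\cH)}(\bsK) = \tr_{L^2((a,b);\cH)}(ST) = \tr_{\cH_1}(TS)$, and a direct computation as in \eqref{A.82b} gives $TS = \int_a^b dx\, G_1(x) F_1(x) \in \cB_1(\cH_1)$ with trace $\int_a^b dx\, \tr_{\cH_1}(G_1(x) F_1(x)) = \int_a^b dx\, \tr_{\cH}(F_1(x) G_1(x))$; the identical computation starting from $\bsK = \bsH_a + QR$ produces the $F_2, G_2$ version.

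For the principal reductions \eqref{A.100a}-\eqref{A.105a}, I would apply \eqref{Z3.23} to the left-hand side of each identity in Theorem \ref{tA.12}, producing
\begin{equation*}
{\det}_{2,L^2((a,b);\cH)}(\bsI - \alpha \bsK) = {\det}_{L^2((a,b);\cH)}(\bsI - \alpha \bsK) \, \exp\big(\alpha \tr_{L^2((a,b);\cH)}(\bsK)\big).
\end{equation*}
By the trace formulas just established, $\exp(\alpha \tr_{L^2((a,b);\cH)}(\bsK))$ coincides with each of the exponential prefactors appearing on the right-hand sides of \eqref{Z2.122}-\eqref{Z2.127}, so the exponentials cancel on both sides and \eqref{A.100a}-\eqref{A.105a} drop out. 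Before canceling, one must of course know that the operators inside the reduced determinants are trace class so that \eqref{Z3.23} is applicable and the standard Fredholm determinants make sense: Lemma \ref{lA.11} supplies $\int_a^b dx\, G_j(x) \widehat F_k(x;\alpha) \in \cB_1$, which in particular places $U(a;\alpha)$ and $U(b;\alpha)$ in the form $I_{\cH_1 \oplus \cH_2} + (\text{trace class})$ by \eqref{A.68a}-\eqref{A.69a}, while $T(\bsI - \alpha \bsH_b)^{-1} S$ and $R(\bsI - \alpha \bsH_a)^{-1} Q$ are trace class because $Q, R, S, T$ are Hilbert--Schmidt by \eqref{A.87A} and $\cB_2 \cdot \cB \cdot \cB_2 \subseteq \cB_1$.

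The main obstacle is thus bookkeeping rather than genuinely analytic content: all substantive input is already packaged into Lemmas \ref{lA.2a}, \ref{lA.6}, \ref{lA.9}, \ref{lA.11} and Theorem \ref{tA.12}. The one subtle point requiring care is the verification that every finite-space operator appearing inside a reduced determinant lies in $\cB_1$, without which passing from $\det_2$ to $\det$ via \eqref{Z3.23} would be invalid; this is handled once and for all by Lemma \ref{lA.11} together with the trace-ideal property.
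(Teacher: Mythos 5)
Your proposal is correct and follows essentially the same route as the paper's own proof: trace-class membership of $\bsH_a,\bsH_b$ via the decompositions \eqref{A.14}--\eqref{A.15} and Lemma \ref{lA.11}, quasi-nilpotency plus Lidskii's theorem for \eqref{A.96a}--\eqref{A.97a}, cyclicity of the trace for \eqref{A.98a}--\eqref{A.99a}, and then division of the identities of Theorem \ref{tA.12} by $\exp(\alpha\tr_{L^2((a,b);\cH)}(\bsK))$ using \eqref{Z3.23}. The only (harmless) difference is that you spell out the trace-class verification for the reduced determinants explicitly, which the paper leaves implicit in Lemma \ref{lA.11}.
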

\begin{proof}
If $\bsK\in \cB_1(L^2((a,b);\cH))$, then $\bsH_a,\bsH_b\in \cB_1(L^2((a,b);\cH))$ is a consequence of \eqref{A.14} and \eqref{A.15}, since $QR,ST\in \cB_1(L^2((a,b);\cH))$ by Lemma \ref{lA.11} (cf.~\eqref{A.86b}).
Since $\bsH_a$ and $\bsH_b$ are quasi-nilpotent, they have no non-zero eigenvalues.  Thus, relations \eqref{A.96a} are clear from Lidskii's theorem (cf., e.g., \cite[Theorem\ VII.6.1]{GGK90}, \cite[Sect.\ III.8, Sect.\ IV.1]{GK69}, \cite[Theorem\ 3.7]{Si05}), and the relations \eqref{A.97a} follow from \eqref{2.98z}.  Subsequently, \eqref{A.14}, \eqref{A.15}, and cyclicity of the trace (i.e., the second equality in \eqref{A.93a}) imply
\begin{align}
\begin{split} 
\tr_{L^2((a,b);\cH)}(\bsK) &= \tr_{L^2((a,b);\cH)}(QR)=\tr_{\cH_2}(RQ)  \\
&= \tr_{L^2((a,b);\cH)}(ST)=\tr_{\cH_1}(TS). \lb{A.106a}
\end{split} 
\end{align}
The equalities throughout \eqref{A.98a} and \eqref{A.99a} then follow from \eqref{A.86c} and \eqref{A.86d}.  Finally, relations \eqref{A.100a}--\eqref{A.105a} follow from those throughout \eqref{Z2.130}, \eqref{A.98a}, and \eqref{A.99a}, noting that (cf.~\eqref{Z3.23})
\begin{equation}\lb{2.130}
{\det}_{L^2((a,b);\cH)}(\bsI - \alpha\bsK) = {\det}_{2,L^2((a,b);\cH)}(\bsI-\alpha\bsK)\exp(-\alpha \tr_{L^2((a,b);\cH)}(\bsK)).
\end{equation}
\end{proof}

The results \eqref{A.96a}--\eqref{A.100a}, \eqref{A.102a}, \eqref{A.103a}, 
\eqref{A.105a} can be found in the finite-dimensional context 
($\dim(\cH) < \infty$ and $\dim(\cH_j) < \infty$, $j=1,2$) in 
Gohberg, Goldberg, and Kaashoek \cite[Theorem 3.2]{GGK90} and in 
Gohberg, Goldberg, and Krupnik \cite[Sects.\ XIII.5, XIII.6]{GGK00} under the 
additional assumptions that $a,b$ are finite. The more general case where 
$(a,b)\subseteq\bbR$ is an arbitrary interval, as well as \eqref{A.101a} and 
\eqref{A.104a}, still in the case where $\cH$ and $\cH_j$, $j=1,2$, are 
finite-dimensional, was derived in \cite{GM03}.

\section{Some Applications to Schr\"odinger Operators with Operator-Valued Potentials} \lb{s3}

To illustrate the potential of the theory developed in Section \ref{s2}, we now briefly 
discuss some applications to Schr\"odinger operators with operator-valued potentials.

We start with some necessary notation:
Let $(a,b) \subseteq \bbR$ be a finite or infinite interval and $\cH$ a complex, separable 
Hilbert space. Integration of $\cH$-valued functions on $(a,b)$ will
always be understood in the sense of Bochner, in particular, if $p\ge 1$, the Banach space 
$L^p((a,b);dx;\cH)$ denotes the set of equivalence classes of strongly measurable $\cH$-valued 
functions which differ at 
most on sets of Lebesgue measure zero, such that $\|f(\cdot)\|_{\cH}^p \in L^1((a,b);dx)$. The
corresponding norm in $L^p((a,b);dx;\cH)$ is given by
\begin{equation}
\|f\|_{L^p((a,b);dx;\cH)} = \bigg(\int_{(a,b)} dx\, \|f(x)\|_{\cH}^p \bigg)^{1/p}. 
\end{equation}
In the case $p=2$, $L^2((a,b);dx;\cH)$ is a separable Hilbert space. 
One recalls that by a result of Pettis \cite{Pe38}, weak 
measurability of $\cH$-valued functions implies their strong measurability.

Sobolev spaces $W^{n,p}((a,b); dx; \cH)$ for $n\in\bbN$ and $p\geq 1$ are defined as follows: 
$W^{1,p}((a,b);dx;\cH)$ is the set of all
$f\in L^p((a,b);dx;\cH)$ such that there exists a $g\in L^p((a,b);dx;\cH)$ and an
$x_0\in(a,b)$ such that
\begin{equation}
f(x)=f(x_0)+\int_{x_0}^x dx' \, g(x') \, \text{ for a.e.\ $x \in (a,b)$.}
\end{equation}
In this case $g$ is the strong derivative of $f$, $g=f'$. Similarly,
$W^{n,p}((a,b);dx;\cH)$ is the set of all $f\in L^p((a,b);dx;\cH)$ so that the first $n$ strong
derivatives of $f$ are in $L^p((a,b);dx;\cH)$. 

For simplicity of notation, from this point on we will omit the Lebesgue measure whenever 
no confusion can occur and henceforth simply write $L^p ((a,b);\cH)$ for 
$L^p((a,b);dx;\cH)$. Moreover, in the special case where $\cH = \bbC$, we omit $\cH$ 
and typically (but not always) the Lebesgue measure and just write $L^p((a,b))$.

We begin with some applications recently considered in \cite{CGPST13} which illustrate 
Theorem \ref{tA.12} and Corollary \ref{cA.13}. We closely follow the treatment in \cite{CGPST13} 
and refer to \cite{GWZ13}, \cite{GWZ13a} for background on Schr\"odinger operators with 
operator-valued potentials. 

We start with the following basic assumptions. 

\begin{hypothesis} \lb{hB.1}
Suppose that $V:\bbR \to \cB_1(\cH)$ is a weakly
measurable operator-valued function with $\|V(\cdot)\|_{\cB_1(\cH)}\in L^1(\bbR)$.
\end{hypothesis}

We note that no self-adjointness condition $V(x) = V(x)^*$ for a.e.\ $x \in \bbR$ is assumed 
to hold in $\cH$.

We introduce the densely defined, closed, linear operators in $L^2(\bbR;\cH)$ defined by 
\begin{align}
&\bsH_0 f=-f'', \quad f\in \dom(\bsH_0) = W^{2,2}(\bbR;\cH), \lb{B.1} \\
&\bsH f=\tau f, \lb{B.2} \\
&f\in\dom(\bsH)=\{g\in L^2(\bbR;\cH) \,|\, g,g' \in AC_{\loc}(\bbR; \cH); \, \tau g
\in L^2(\bbR;\cH)\},     \no
\end{align}
where we denoted 
\begin{equation} 
(\tau f)(x) = -f''(x) + V(x) f(x) \, \text{ for a.e.\ $x\in\bbR$.}
\end{equation}  
In addition, we introduce the densely defined, closed, linear operator $\bsV$ in $L^2(\bbR;\cH)$ by 
\begin{align}
& (\bsV f)(x) = V(x) f(x),    \no \\
& f \in \dom(\bsV) = \bigg\{g \in L^2(\bbR;\cH) \, \bigg| \, g(x) \in \dom(V(x)) 
\text{ for a.e.\ $x \in \bbR$,}     \lb{B.2a} \\
& \hspace*{2cm} x \mapsto V(x) g(x) \text{ is (weakly) measurable,} \, 
\int_{\bbR} dx \, \|V(x) g(x)\|^2_{\cH} < \infty\bigg\}.      \no
\end{align}

Next we turn to the $\cB(\cH)$-valued Jost solutions $f_\pm (z,\cdot)$ of
\begin{equation} 
-\psi''(z,x)+V(x)\psi(z,x)=z\psi(z,x), \quad z\in\bbC, \; x \in \bbR,    \lb{B.2b} 
\end{equation}
(i.e., $f_\pm (z,\cdot) h\in W^{2,1}_\loc((a,b);\cH)$ for every $h\in\cH$) defined by 
\begin{align}
\begin{split}
f_\pm (z,x)=e^{\pm iz^{1/2}x} I_{\cH} - \int_x^{\pm\infty} dx' \,
g_0(z,x,x')V(x')f_\pm (z,x'),& \lb{B.3} \\
z \in \bbC, \; \Im(z^{1/2})\geq 0, \; x\in\bbR,&    
\end{split} 
\end{align}
where $g_0(z,\cdot,\cdot)$ is the $\cB(\cH)$-valued Volterra Green's function of 
$\bsH_0$ given by 
\begin{equation}
g_0(z,x,x') = z^{-1/2}\sin(z^{1/2}(x-x')) I_{\cH}, \quad z \in \bbC, \; x, x' \in \bbR.       \lb{B.4}
\end{equation}

We also recall the $\cB(\cH)$-valued Green's function of $\bsH_0$,
\begin{align}
\begin{split} 
G_0(z,x,x')=\big(\bsH_0 - z \bsI\big)^{-1}(x,x')=
\f{i}{2z^{1/2}}e^{iz^{1/2}|x-x'|} I_{\cH},&     \lb{B.5} \\ 
z \in \bbC \backslash [0,\infty), \; \Im(z^{1/2})>0, \; x,x'\in\bbR,&
\end{split} 
\end{align}
with $\bsI$ representing the identity operator in $L^2(\bbR;\cH)$. 

The $\cB(\cH)$-valued Jost function $\cF$ associated with the pair of self-adjoint 
operators $(\bsH, \bsH_0)$ is then given by
\begin{align}
\cF(z)&=\f{1}{2iz^{1/2}} W(f_-(\ol z)^*,f_+(z))  \lb{B.7} \\
&=I_{\cH}-\f{1}{2iz^{1/2}}\int_\bbR dx \, e^{- i z^{1/2}x}V(x)f_+ (z,x),    \lb{B.8} \\
&=I_{\cH}-\f{1}{2iz^{1/2}}\int_\bbR dx \, f_- (\ol z,x)^* V(x) e^{i z^{1/2}x},       \lb{B.8a} \\
& \hspace*{2.6cm} z \in \bbC \backslash \{0\}, \; \Im(z^{1/2})\geq 0.    \no 
\end{align}
Here $W(\cdot, \cdot)$ denotes the Wronskian defined by 
\begin{equation}
W(F_1,F_2)(x) = F_1(x) F'_2(x) - F'_1(x) F_2(x), \quad x \in (a,b),     
\end{equation}
for $F_1,F_2$ strongly continuously differentiable $\cB(\cH)$-valued
functions.

Next, we recall the polar decomposition of a densely defined, closed, linear operator $S$ 
in a complex separable Hilbert space $\cK$
\begin{equation}
S = |S| U_S = U_S |S|,    \lb{B.11} 
\end{equation}
where $U_S$ is a partial isometry in $\cK$ and $|S| = (S^* S)^{1/2}$,  

Introducing the factorization of $\bsV = \bsu \bsv$, where 
\begin{equation}
\bsu = |\bsV|^{1/2} \bsU_{\bsV} = \bsU_{\bsV} |\bsV|^{1/2}, \quad  \bsv = |\bsV|^{1/2}, 
\quad \bsV = |\bsV| \bsU_{\bsV} = \bsU_{\bsV} |\bsV| = \bsu \bsv = \bsv \bsu,    \lb{B.12} 
\end{equation}
one verifies one verifies (see, e.g., \cite{GLMZ05}, \cite{Ka66} and the references cited therein) that 
\begin{align}
& (\bsH - z \bsI)^{-1} - (\bsH_0 - z \bsI)^{-1}    \lb{B.13} \\ 
& \quad = (\bsH_0 - z \bsI)^{-1} \bsv \big[\bsI + \ol{\bsu (\bsH_0 - z \bsI)^{-1} \bsv}\big]^{-1} 
\bsu (\bsH_0 - z \bsI)^{-1}, \quad z\in\bbC\backslash\sigma(\bsH). \no
\end{align}

Next, to make contact with the notation used in Section \ref{s2}, we now 
introduce the operator $\bsK(z)$ in $L^2(\bbR;\cH)$ by
\begin{equation}
\bsK(z)=-\ol{\bsu (\bsH_0 - z \bsI)^{-1}\bsv}, \quad
z\in\bbC\backslash [0,\infty),   \lb{B.14}
\end{equation}
with integral kernel
\begin{equation}
K(z,x,x')=-u(x)G_0(z,x,x')v(x'), \quad z \in \bbC \backslash [0,\infty), \; \Im(z^{1/2}) > 0, 
\; x,x' \in\bbR,  \lb{B.15} 
\end{equation}
and the Volterra operators $\bsH_{-\infty} (z)$, $\bsH_\infty (z)$ (cf.\ \eqref{A.4},
\eqref{A.12a}) in $L^2(\bbR;\cH)$, with integral kernel
\begin{equation}
H(z,x,x')=u(x)g^{(0)}(z,x,x')v(x').  \lb{B.16} 
\end{equation}
Here we used the abbreviations,  
\begin{align}
\begin{split} 
& u(x) = |V(x)|^{1/2} U_{V(x)}, \quad v(x) = |V(x)|^{1/2},    \lb{B.17} \\ 
& V(x) = |V(x)| U_{V(x)} = U_{V(x)} |V(x)| = u(x) v(x) \, \text{ for a.e.\ $x\in\bbR$.} 
\end{split} 
\end{align}

Moreover, we introduce for a.e.\ $x\in\bbR$, 
\begin{align}
\begin{split}
f_1(z,x)&=-u(x) e^{iz^{1/2}x}, \hspace*{.68cm} 
g_1(z,x)=(i/2)z^{-1/2}v(x)e^{-iz^{1/2}x},  \\
f_2(z,x)&=-u(x)e^{-iz^{1/2}x}, \quad \; 
g_2(z,x)=(i/2)z^{-1/2}v(x)e^{iz^{1/2}x}. \lb{B.18}
\end{split}
\end{align}
Assuming temporarily that 
\begin{equation}
\supp(\|V(\cdot)\|_{\cB(\cH)}) \text{ is compact} \lb{B.19}
\end{equation}
(employing the notion of support for regular distributions on $\bbR$) 
in addition to Hypothesis \ref{hB.1}, identifying $\cH_1 = \cH_2 = \cH$, and introducing 
$\hat f_j(z,\cdot)$, $j=1,2$, by
\begin{align}
\hat f_1(z,x)&=f_1(z,x)-\int_x^\infty dx'\, H(z,x,x')\hat f_1(z,x'), \lb{B.20}
\\ 
\hat f_2(z,x)&=f_2(z,x)+\int_{-\infty}^x dx'\, H(z,x,x')\hat f_2(z,x'),
\lb{B.21} \\ 
& \hspace*{1.95mm} z\in\bbC\backslash [0,\infty), \; \Im(z^{1/2}) > 0, \; \text{a.e.\ } \, x \in\bbR, \no
\end{align}  
yields $\hat f_j(z,\cdot) \in L^2(\bbR;\cH)$, $j=1,2$, upon a standard iteration of the Volterra 
integral equations \eqref{B.20}, \eqref{B.21}. In fact, $\hat f_j(z,\cdot) \in L^2(\bbR;\cH)$, $j=1,2$, have 
compact support as long as \eqref{B.19} holds. By
comparison with \eqref{B.3}, one then identifies for all $z\in\bbC\backslash [0,\infty)$, 
$\Im(z^{1/2}) > 0$, and a.e.\ $x \in\bbR$, 
\begin{equation}
\hat f_1(z,x)=-u(x) f_+ (z,x),  \quad  \hat f_2(z,x)=-u(x) f_- (z,x). \lb{B.23} 
\end{equation}
We note that the temporary compact support assumption \eqref{B.19} on $\|V(\cdot)\|_{\cB(\cH)}$ has
only been introduced to guarantee that $f_j(z,\cdot), \hat f_j(z,\cdot) \in
L^2(\bbR;\cH)$, $j=1,2$ for all $z\in\bbC\backslash [0,\infty)$, 
$\Im(z^{1/2}) > 0$. This extra hypothesis can be removed by a standard approximation argument 
(see, \cite{CGPST13}, \cite{GM03}). 

Recalling the following basic fact (cf.\ \cite{CGPST13}), 
\begin{equation}
\bsK(z)\in \cB_1\big(L^2(\bbR;\cH)\big), \quad z\in\bbC\backslash [0,\infty), \lb{B.24}
\end{equation}
still assuming Hypothesis \ref{hB.1}, an application of Lemma \ref{lA.7} and Corollary \ref{cA.13} 
then yields the following Fredholm determinant reduction result, identifying the Fredholm determinant 
of $\bsI - \bsK(z)$ and that of the $\cB(\cH)$-valued Jost function $\cF(z)$ (the inverse transmission 
coefficient).

\begin{theorem} [\cite{CGPST13}] \lb{tB.2}
Assume Hypothesis \ref{hB.1}, then 
\begin{equation}
{\det}_{L^2(\bbR;\cH)}(\bsI - \bsK(z)) = {\det}_{\cH}(\cF(z)), \quad z\in\bbC\backslash [0,\infty).   \lb{B.26}
\end{equation} 
\end{theorem}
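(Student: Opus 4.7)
The plan is to recognize $\bsK(z)$ as a semi-separable Hilbert--Schmidt (in fact, trace class by \eqref{B.24}) integral operator of the form treated in Section \ref{s2} and then apply Corollary \ref{cA.13} directly. Under the temporary assumption that $\supp(\|V(\cdot)\|_{\cB(\cH)})$ is compact, I would identify $\cH_1 = \cH_2 = \cH$, $(a,b) = \bbR$, and match the data $F_j(x) = f_j(z,x)$, $G_j(x) = g_j(z,x)$, $j=1,2$, from \eqref{B.18}; the factorization $G_0(z,x,x') = (i/(2z^{1/2}))e^{iz^{1/2}|x-x'|}I_\cH$ then produces exactly $K(z,x,x') = F_j(x)G_j(x')$ on the two triangles $\{x' \lessgtr x\}$. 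Hypothesis \ref{hA.10} is satisfied because $V(x) \in \cB_1(\cH)$ with $\|V(\cdot)\|_{\cB_1(\cH)} \in L^1(\bbR)$ forces $u(\cdot), v(\cdot) \in \cB_2(\cH)$ with their $\cB_2$-norms in $L^2(\bbR)$, via $\|V(x)\|_{\cB_1(\cH)} = \|v(x)\|_{\cB_2(\cH)}^2 = \|u(x)\|_{\cB_2(\cH)}^2$.

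Next, I would observe that the Volterra equations \eqref{B.20}, \eqref{B.21} coincide with \eqref{A.35}, \eqref{A.36} at $\alpha = 1$, so by uniqueness (Lemma \ref{lA.6}) together with the identification \eqref{B.23}, one has $\widehat F_1(x; 1) = \widehat f_1(z,x) = -u(x)f_+(z,x)$ and $\widehat F_2(x; 1) = -u(x)f_-(z,x)$. With \eqref{B.24} in hand, Corollary \ref{cA.13}, specifically \eqref{A.101a}, applies and yields
\[
{\det}_{L^2(\bbR;\cH)}(\bsI - \bsK(z)) = {\det}_{\cH}\biggl(I_\cH - \int_\bbR dx\, G_1(x)\widehat F_1(x;1)\biggr).
\]

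The remaining step is an explicit identification with the Jost function: substituting $G_1(x) = (i/2)z^{-1/2}v(x)e^{-iz^{1/2}x}$ and $\widehat F_1(x;1) = -u(x)f_+(z,x)$, using $v(x)u(x) = V(x)$ (cf.\ \eqref{B.12}) and the fact that the scalar factor $e^{-iz^{1/2}x}$ commutes with $V(x)$, I compute
\[
\int_\bbR dx\, G_1(x)\widehat F_1(x;1) = -\frac{i}{2z^{1/2}}\int_\bbR dx\, e^{-iz^{1/2}x}V(x)f_+(z,x),
\]
and, using the elementary identity $-1/(2iz^{1/2}) = i/(2z^{1/2})$, comparison with \eqref{B.8} shows that this integral equals $-(\cF(z) - I_\cH)$. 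Rearrangement yields \eqref{B.26} under the compact-support assumption. (As a consistency check, the parallel chain based on \eqref{A.104a}, $\widehat F_2$, and \eqref{B.8a} reproduces the same identity.)

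Finally, to remove the temporary support restriction \eqref{B.19}, I would approximate $V$ by truncations $V_n(\cdot) = \chi_{[-n,n]}(\cdot)V(\cdot)$: the left-hand side of \eqref{B.26} is continuous in $V$ because $V \mapsto \bsK(z)$ is continuous from $L^1(\bbR; \cB_1(\cH))$ into $\cB_1(L^2(\bbR;\cH))$ and $\bsT \mapsto {\det}_{L^2(\bbR;\cH)}(\bsI - \bsT)$ is continuous in the trace norm, while the right-hand side is continuous because the Jost solutions $f_\pm(z, \cdot)$, and hence $\cF(z)$, depend continuously on $V$ through the estimates that follow from iterating \eqref{B.3}. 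The detailed approximation argument has already been carried out in \cite{CGPST13}, to which I would simply refer. The main obstacle is really administrative rather than conceptual: one must verify that the scalar-kernel computation $vu = V$ and the commutation with exponentials line up the prefactors correctly so that the Jost function \eqref{B.8} emerges \emph{exactly}, and then ensure the approximation argument is compatible with both the trace-class hypothesis on $\bsK(z)$ and the integrability of the Volterra iterates defining $f_\pm$.
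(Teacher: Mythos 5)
Your proposal is correct and follows essentially the same route as the paper: the identification $F_j=f_j(z,\cdot)$, $G_j=g_j(z,\cdot)$ from \eqref{B.18}, the recognition of $\widehat F_1(x;1)=-u(x)f_+(z,x)$ via \eqref{B.20}--\eqref{B.23}, the application of Corollary \ref{cA.13} (using \eqref{B.24}), the comparison with \eqref{B.8}, and the removal of the temporary compact-support assumption \eqref{B.19} by the approximation argument deferred to \cite{CGPST13}. Your explicit verification of the prefactor bookkeeping ($vu=V$ and $-i/(2z^{1/2})=1/(2iz^{1/2})$) is exactly the step the paper leaves implicit, and it checks out.
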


Relation \eqref{B.26} represents the infinite-dimensional version of the celebrated Jost--Pais-type 
reduction of Fredholm determinants \cite{JP51} (see also \cite{Ge86}, \cite{GM03}, \cite{Ne80}, 
\cite{Si00}). 

Next, we revisit the second-order equation \eqref{B.2b} from a different perspective. We intend to 
rederive the result analogous to \eqref{B.26} in the context of $2$-modified determinants 
$\det_2(\cdot)$ by rewriting the second-order Schr\"odinger equation as a first-order
$2\times 2$ block operator system, taking the latter as our point of departure. (In the special case 
where $\cH$ is finite-dimensional, this was considered in \cite{GLM07}, \cite{GLZ08}, 
\cite{GM03}, \cite{KM14}.) 

Assuming Hypothesis \ref{hB.1} for the rest of this example, the Schr\"odinger
equation with the operator-valued potential $V(\cdot)$, 
\begin{equation}
-\psi''(z,x) + V(x)\psi(z,x)=z\psi(z,x), \lb{B.27}
\end{equation} 
is equivalent to the first-order $2 \times 2$ block operator system 
\begin{equation}
\Psi'(z,x)=\begin{pmatrix} 0 & I_{\cH} \\ V(x)-z & 0 \end{pmatrix} \Psi(z,x), \quad 
\Psi(z,x) = \begin{pmatrix} \psi(z,x) \\ \psi'(z,x) \end{pmatrix}. \lb{B.28}
\end{equation}
Since $\Phi^{(0)}$ defined by 
\begin{align}
\begin{split} 
\Phi^{(0)} (z,x)= \f{1}{2i z^{1/2}}\begin{pmatrix} \exp(-iz^{1/2}x) I_{\cH} & \exp(iz^{1/2}x) I_{\cH} \\
-iz^{1/2} \exp(-iz^{1/2}x) I_{\cH} & iz^{1/2} \exp(iz^{1/2}x) I_{\cH} \end{pmatrix},& \\
\Im(z^{1/2})\geq 0,&    \lb{B.29}
\end{split} 
\end{align}
is a fundamental block operator matrix of the system \eqref{B.28} in the case $V=0$ a.e., and
since 
\begin{equation}
\Phi^{(0)} (z,x) \Phi^{(0)} (z,x')^{-1}=\begin{pmatrix} 
\cos(z^{1/2}(x-x')) I_{\cH} & z^{-1/2}\sin(z^{1/2}(x-x')) I_{\cH} \\
-z^{1/2}\sin(z^{1/2}(x-x')) I_{\cH} &  \cos(z^{1/2}(x-x')) I_{\cH} \end{pmatrix}, \lb{B.30}
\end{equation} 
the system \eqref{B.28} has the following pair of linearly independent
solutions for $z\neq 0$,
\begin{align}
&F_\pm(z,x)=F^{(0)}_\pm (z,x) \no \\
& \quad - \int_x^{\pm\infty} dx' \begin{pmatrix} 
\cos(z^{1/2}(x-x')) I_{\cH} & z^{-1/2}\sin(z^{1/2}(x-x')) I_{\cH} \\
-z^{1/2}\sin(z^{1/2}(x-x')) I_{\cH} &  \cos(z^{1/2}(x-x')) I_{\cH} \end{pmatrix} \no \\
& \hspace{2.3cm} \times \begin{pmatrix} 0 & 0 \\ V(x') & 0 \end{pmatrix}
F_\pm(z,x') \no \\ 
& \quad =F^{(0)}_\pm (z,x) - \int_x^{\pm\infty} dx' 
\begin{pmatrix} z^{-1/2}\sin(z^{1/2}(x-x')) I_{\cH} & 0 \\
\cos(z^{1/2}(x-x')) I_{\cH} & 0 \end{pmatrix} V(x') F_\pm(z,x'),    \no \\
& \hspace*{6.5cm} \Im(z^{1/2})\geq 0, \; z \neq 0, \; x\in\bbR,    \lb{B.31}
\end{align}
where we abbreviated
\begin{equation}
F^{(0)}_\pm (z,x) = \begin{pmatrix} I_{\cH} \\ \pm iz^{1/2} I_{\cH} \end{pmatrix} 
\exp(\pm i z^{1/2} x). \lb{B.32}
\end{equation}
By inspection, one has
\begin{equation}
F_\pm (z,x)=\begin{pmatrix} f_\pm (z,x) \\ f'_\pm(z,x) \end{pmatrix}, \quad 
\Im(z^{1/2})\geq 0, \; z \neq 0, \; x\in\bbR,   \lb{B.33}
\end{equation}
with $f_\pm (z,\cdot)$ given by \eqref{B.3}. Next, one introduces 
\begin{align}
\begin{split} 
f_1(z,x)&=-u(x) \begin{pmatrix} I_{\cH} \\ i z^{1/2} I_{\cH} \end{pmatrix} 
\exp( i z^{1/2} x),  \\
f_2(z,x)&= -u(x) \begin{pmatrix} I_{\cH} \\ -i z^{1/2} I_{\cH} \end{pmatrix} \exp( -i z^{1/2} x),     \\
g_1(z,x)&=v(x)\bigg(\f{i}{2z^{1/2}} \exp(-iz^{1/2} x) I_{\cH} \quad 0\bigg),     \\ 
g_2(z,x)&=v(x)\bigg(\f{i}{2z^{1/2}} \exp(iz^{1/2} x) I_{\cH} \quad 0\bigg),     \lb{B.34} 
\end{split} 
\end{align}
and hence
\begin{align}
\begin{split} 
H(z,x,x')& = f_1(z,x) g_1(z,x')-f_2(z,x)g_2(z,x') \lb{B.35} \\
& = u(x) \begin{pmatrix} z^{-1/2}\sin(z^{1/2}(x-x')) I_{\cH} & 0 \\
\cos(z^{1/2}(x-x')) I_{\cH} & 0 \end{pmatrix} v(x') 
\end{split} 
\end{align}
and we introduce
\begin{align}
\widetilde K(z,x,x')&=\begin{cases} f_1(z,x)g_1(z,x'), & x'<x, \\ 
f_2(z,x)g_2(z,x'), & x<x',\end{cases}  \lb{B.36} \\
&=\begin{cases} -u(x)\f{1}{2}\exp(iz^{1/2}(x-x'))\begin{pmatrix} iz^{-1/2} I_{\cH} & 0
\\  - I_{\cH} & 0  \end{pmatrix} v(x'), & x'<x, \\ 
-u(x)\f{1}{2}\exp(-iz^{1/2}(x-x'))\begin{pmatrix} iz^{-1/2} I_{\cH} & 0 \\  
I_{\cH} & 0  \end{pmatrix} v(x'), & x<x', \end{cases}    \no \\
& \hspace*{4.5cm} \Im(z^{1/2})\geq 0, \, z\neq 0, \; x,x'\in\bbR.    \lb{B.37} 
\end{align}
One notes that $\widetilde K(z,\cdot,\cdot)$ is discontinuous on the diagonal
$x=x'$.  Since 
\begin{equation}
\widetilde K(z,\cdot,\cdot)\in L^2(\bbR^2;dx\,dx'; \cH)^{2\times 2}, \quad \Im(z^{1/2})\geq 0,
\, z\neq 0,  \lb{B.38}
\end{equation}
the associated operator $\widetilde \bsK (z)$ with integral kernel \eqref{B.37} is
Hilbert--Schmidt,
\begin{equation}
\widetilde \bsK (z) \in \cB_2 \big(L^2(\bbR; \cH)^2\big), \quad \Im(z^{1/2})\geq 0, \; 
z\neq 0.  \lb{B.39}
\end{equation}
Next, assuming again temporarily \eqref{B.19}, the integral equations defining $\hat f_j(z,x)$, $j=1,2$,  
\begin{align}
\hat f_1(z,x)&=f_1(z,x)-\int_x^\infty dx'\, H(z,x,x')\hat f_1(z,x'), 
\lb{B.40} \\ 
\hat f_2(z,x)&=f_2(z,x)+\int_{-\infty}^x dx'\, H(z,x,x')\hat f_2(z,x'),
\lb{B.41} \\ 
&\hspace*{2cm}  \Im(z^{1/2})\geq 0, \; z\neq 0, \; x \in\bbR, \no
\end{align}  
yield solutions $\hat f_j(z,\cdot) \in L^2(\bbR; \cH)^2$, $j=1,2$. By
comparison with \eqref{B.31}, one then identifies
\begin{equation}
\hat f_1(z,x) = - u(x) F_+ (z,x),  \quad \hat f_2(z,x) = - u(x) F_- (z,x).      \lb{B.42}
\end{equation} 
We note that the temporary compact support assumption on $V$ has
only been invoked to guarantee that $f_j(z,\cdot), \, \hat f_j(z,\cdot) \in
L^2(\bbR; \cH)^2$, $j=1,2$. This extra hypothesis can be removed along a standard 
approximation method as detailed in \cite{GM03}. 

An application of Lemma \ref{lA.11} and Theorem \ref{tA.12} then yields the
following result (with $\bsK(\cdot)$ defined in \eqref{B.14} and $\bsI_2$ denoting the unit operator 
in $L^2(\bbR;\cH)^2$). 

\begin{theorem} \lb{tB.3}
Assume Hypothesis \ref{hB.1}, then  
\begin{align}
{\det}_{2, L^2(\bbR;\cH)^2} \big(\bsI_2 - \widetilde \bsK (z)\big) 
&=\cF(z)\exp\bigg(-\f{i}{2z^{1/2}} \int_\bbR dx\,
\tr_{\cH}(V(x))\bigg) \lb{B.43} \\ 
&={\det}_{2, L^2(\bbR;\cH)} (\bsI - \bsK(z)), \quad z \in \bbC \backslash [0,\infty).   \lb{B.44}
\end{align} 
\end{theorem}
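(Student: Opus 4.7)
The plan is to deduce both equalities from the machinery already established in Section \ref{s2}, specifically Theorem \ref{tA.12} applied to $\widetilde\bsK(z)$ with the abstract identifications $\cH \rightsquigarrow \cH\oplus\cH$ and $\cH_1 = \cH_2 \rightsquigarrow \cH$. Under the temporary compact-support assumption \eqref{B.19}, the factorization \eqref{B.36}--\eqref{B.37} shows that $\widetilde\bsK(z)$ is exactly the semi-separable operator whose data are given by the $f_j(z,\cdot)$, $g_j(z,\cdot)$ of \eqref{B.34}. Because $V(x)\in\cB_1(\cH)$ and $\|u(x)\|_{\cB_2(\cH)}^2 = \|v(x)\|_{\cB_2(\cH)}^2 = \|V(x)\|_{\cB_1(\cH)} \in L^1(\bbR)$, and since the compact support truncates the exponentials, the Hilbert--Schmidt measurability and $L^2$-integrability requirements of Hypothesis \ref{hA.10} are met.

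Next I would make the two key computations that convert the abstract formula \eqref{Z2.123} (with $\alpha = 1$) into the claimed expression. First, comparing \eqref{B.40}--\eqref{B.41} term-by-term with \eqref{B.31} yields the structural identification $\widehat f_1(z,x) = -u(x) F_+(z,x)$, so that, using \eqref{B.34},
\begin{equation*}
g_1(z,x)\widehat f_1(z,x) = -\tfrac{i}{2z^{1/2}} e^{-iz^{1/2}x} v(x) u(x) f_+(z,x) = -\tfrac{i}{2z^{1/2}} e^{-iz^{1/2}x} V(x) f_+(z,x),
\end{equation*}
and integration over $\bbR$ combined with \eqref{B.8} gives $I_{\cH} - \int_\bbR dx\, g_1(z,x)\widehat f_1(z,x) = \cF(z)$. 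Second, a direct matrix multiplication of the column operator $-u(x)(I_{\cH};\,iz^{1/2}I_{\cH})^\top e^{iz^{1/2}x}$ with the row operator $v(x)(\tfrac{i}{2z^{1/2}} e^{-iz^{1/2}x} I_{\cH},\,0)$ shows that the $2\times 2$ block $f_1 g_1$ has vanishing second column, so the trace reduces to the $(1,1)$-entry and yields $\tr_{\cH\oplus\cH}(f_1(z,x) g_1(z,x)) = -\tfrac{i}{2z^{1/2}} \tr_\cH(V(x))$. Plugging both into \eqref{Z2.123} produces \eqref{B.43} (with $\cF(z)$ understood as ${\det}_\cH(\cF(z))$).

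For the second equality \eqref{B.44}, since $\bsK(z)\in\cB_1$ by \eqref{B.24}, the relation \eqref{Z3.23} gives ${\det}_{2,L^2(\bbR;\cH)}(\bsI - \bsK(z)) = {\det}_{L^2(\bbR;\cH)}(\bsI - \bsK(z))\exp(\tr_{L^2(\bbR;\cH)}\bsK(z))$. Theorem \ref{tB.2} supplies the first factor as ${\det}_\cH(\cF(z))$, and the trace is computed from the diagonal of the integral kernel \eqref{B.15}: $K(z,x,x) = -u(x) \cdot \tfrac{i}{2z^{1/2}} I_\cH \cdot v(x)$, giving $\tr_{L^2(\bbR;\cH)}(\bsK(z)) = -\tfrac{i}{2z^{1/2}} \int_\bbR \tr_\cH(V(x))\,dx$. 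This is exactly the exponential factor of \eqref{B.43}, establishing \eqref{B.44}.

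The main obstacle is that under the general Hypothesis \ref{hB.1} without compact support, the $\cB_2$-norm integrability hypothesis of Theorem \ref{tA.12} fails for the $f_j, g_j$ of \eqref{B.34}, because the factors $e^{\pm iz^{1/2}x}$ grow exponentially at one end of $\bbR$ whenever $\Im(z^{1/2})>0$. The remedy, as in \cite{CGPST13} and \cite{GM03}, is to first prove the identities for the truncated potentials $V_n = \chi_{[-n,n]}V$ and then pass to $n\to\infty$, verifying continuity of all four quantities involved: the $2$-modified determinant ${\det}_{2,L^2(\bbR;\cH)^2}(\bsI_2 - \widetilde\bsK_n(z))$ with respect to $\widetilde\bsK_n(z) \to \widetilde\bsK(z)$ in $\cB_2$-norm, the Jost function $\cF_n(z) \to \cF(z)$ in $\cB(\cH)$ via the Volterra expansion of $f_+$, the trace integral $\int\tr_\cH(V_n)\to \int\tr_\cH(V)$ by dominated convergence against $\|V\|_{\cB_1}\in L^1(\bbR)$, and the trace-class convergence $\bsK_n(z)\to \bsK(z)$ in $\cB_1(L^2(\bbR;\cH))$. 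The bookkeeping for this approximation argument is the delicate part, but each ingredient is available in the references cited.
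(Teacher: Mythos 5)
Your proposal is correct and follows exactly the route the paper intends: the paper's proof is the single sentence ``an application of Lemma \ref{lA.11} and Theorem \ref{tA.12}'' applied to the semi-separable data \eqref{B.34} with $\cH\rightsquigarrow\cH\oplus\cH$, $\cH_1=\cH_2\rightsquigarrow\cH$, together with the identification \eqref{B.42}, Theorem \ref{tB.2} and \eqref{Z3.23} for \eqref{B.44}, and the truncation/approximation argument from \cite{GM03} that the paper explicitly invokes to remove the compact-support assumption. Your computations of $g_1\widehat f_1$, of $\tr_{\cH\oplus\cH}(f_1g_1)$, and of $\tr_{L^2(\bbR;\cH)}(\bsK(z))$ supply precisely the details the paper leaves implicit (including the reading of $\cF(z)$ in \eqref{B.43} as ${\det}_{\cH}(\cF(z))$).
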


Thus, equation \eqref{B.27} and the first-order system \eqref{B.28} share the same  
$2$-modified Fredholm determinant. 

While we focused on Schr\"odinger operators and associated first-order systems with operator-valued potentials on $\bbR$, completely analogous results can be derived on the half-line $(0,\infty)$. Rather 
than repeating such applications for the half-line, we turn to a slightly different application involving semi-separable integral operators in $L^2((0,\infty); \cH)$ analogous to \eqref{B.14}.  

We introduce the following basic assumptions. 

\begin{hypothesis} \lb{hB.4}
Let $V: (0,\infty) \to \cB_1(\cH)$ be a weakly
measurable operator-valued function with $\|V(\cdot)\|_{\cB_1(\cH)}\in L^1((0,\infty); (1+x)dx)$.
\end{hypothesis}

Again we note that $V(x)$ is not necessarily assumed to be self-adjoint in $\cH$ for 
a.e.\ $x \geq 0$.

In analogy to \eqref{B.1}, \eqref{B.2}, we introduce the densely defined, closed, Dirichlet-type 
operators in $L^2((0,\infty); \cH)$ defined by 
\begin{align}
& \bsH_{0,+} f=-f'', \no \\
&f\in \dom\big(\bsH_{0,+}\big)=\{g\in L^2((0,\infty); \cH) \,|\, g,g' \in
AC([0,R]; \cH) \text{ for all $R>0$},    \no \\
& \hspace*{6cm} f(0_+)=0, \, f''\in L^2((0,\infty); \cH)\},    \lb{4.3} \\
& \bsH_+f=-f''+Vf, \no \\
&f\in\dom(\bsH_+)=\{g\in L^2((0,\infty); \cH) \,|\, g,g' \in AC([0,R];\cH) 
\text{ for all $R>0$}, \lb{4.4} \\
& \hspace*{4.8cm} f(0_+)=0, \, (-f''+Vf) \in L^2((0,\infty); \cH)\}. \no
\end{align}

We also introduce the $\cB(\cH)$-valued Green's function of $\bsH_{0,+}$,
\begin{align}
G_{0,+}(z,x,x') &= \big(\bsH_{0,+} - z \bsI_+\big)^{-1}(x,x')=\begin{cases}  
z^{-1/2}\sin(z^{1/2}x)e^{iz^{1/2}x'} I_{\cH}, & x\leq x', \\
z^{-1/2}\sin(z^{1/2}x')e^{iz^{1/2}x} I_{\cH}, & x \geq x', 
\end{cases}    \no \\
&= \f{i}{2 z^{1/2}} \Big[e^{i z^{1/2} |x - x'|} - e^{i z^{1/2} (x + x')} \Big] I_{\cH}, 
\quad z \in \bbC \backslash \sigma(\bsH_{0,+}), \; x, x' \geq 0,     \lb{4.8}
\end{align}
with $\bsI_+$ denoting the identity operator in $L^2((0,\infty); \cH)$. Introducing the factorization
analogous to \eqref{B.12}, \eqref{B.17} (for $x \geq 0$), one verifies as in \eqref{B.13}, 
\begin{align}
& (\bsH_+-z \bsI_+)^{-1} = \big(\bsH_{0,+} - z \bsI_+\big)^{-1} \no \\
& \quad -\big(\bsH_{0,+} - z \bsI_+\big)^{-1} \bsv
\Big[I+\ol{ \bsu \big(\bsH_{0,+} - z \bsI_+\big)^{-1} \bsv}\Big]^{-1} 
\bsu \big(\bsH_{0,+} - z \bsI_+\big)^{-1}, 
\lb{4.14}  \\ 
& \hspace*{8.6cm} z\in\bbC\backslash\sigma(\bsH_+),  \no 
\end{align}
and hence also introduces the operator $\bsK_+(z)$ in $L^2((0,\infty); \cH)$ by
\begin{equation}
\bsK_+(z)=-\ol{\bsu \big(\bsH_{0,+} - z \bsI_+\big)^{-1} \bsv}, \quad 
z\in\bbC\backslash \sigma\big(\bsH_{0,+}\big),     \lb{4.15}
\end{equation}
with $\cB(\cH)$-valued integral kernel
\begin{equation}
K_+(z,x,x')=-u(x)G_{0,+}(z,x,x')v(x'), \quad \Im(z^{1/2})\geq 0, \; x,x' > 0.    \lb{4.16} 
\end{equation}

Assuming $V(x)$ to be self-adjoint for a.e.\ $x > 0$, we introduce its negative part $V_-(\cdot)$ 
(using the spectral theorem) by $V_-(\cdot) = [|V(\cdot)| - V(\cdot)]/2$ a.e.\ on $(0,\infty)$.
We also use the notation $N(\lambda; A)$, $\lambda < \inf(\sigma_{ess}(A))$ to denote the number 
of discrete eigenvalues (counting multiplicity) of the self-adjoint operator $A$ less than or equal to 
$\lambda$.  

Then the well-known Bargmann bound \cite{Ba52} on the number of negative eigenvalues for 
Dirichtlet-type half-line Schr\"odinger operators reads as follows in the current context of 
operator-valued potentials:
 
\begin{theorem} \lb{tB.5}
Assume Hypothesis \ref{hB.4} and suppose that $V(x)$ is self-adjoint in $\cH$ for a.e.\ $x > 0$. 
Then the number of negative eigenvalues of $\bsH_+$, denoted by $N(\bsH_+)$, satisfies the bound,
\begin{equation}
N(\bsH_+) \leq \int_{(0,\infty)} dx \, x \tr_{\cH}(V_-(x)).  
\end{equation}
\end{theorem}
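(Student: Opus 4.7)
The plan is to establish the bound by combining the min-max principle with the Birman--Schwinger principle and an explicit computation of the trace of the limiting Birman--Schwinger operator. Since $V(x)$ is assumed self-adjoint for a.e.\ $x > 0$, one may write $V = V_+ - V_-$ with $V_\pm(x)\geq 0$ defined via the spectral theorem, so $\bsH_+ \geq \bsH_{0,+} - \bsV_-$ in the sense of quadratic forms. The min-max principle then yields $N(\bsH_+) \leq N(\bsH_{0,+} - \bsV_-)$, reducing the problem to the case of a purely attractive potential $-V_-$.

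Next, for $\kappa > 0$, I introduce the nonnegative Birman--Schwinger operator $\bsB(\kappa)$ on $L^2((0,\infty);\cH)$ by
\begin{equation*}
\bsB(\kappa) = \overline{\bsv_-\,(\bsH_{0,+} + \kappa^2 \bsI_+)^{-1}\,\bsv_-}, \qquad \bsv_-(x) = V_-(x)^{1/2},
\end{equation*}
which, using the Dirichlet Green's function \eqref{4.8} evaluated at $z = -\kappa^2$, has operator-valued integral kernel
\begin{equation*}
B_\kappa(x,x') = V_-(x)^{1/2}\, \frac{1}{2\kappa}\bigl[e^{-\kappa|x-x'|} - e^{-\kappa(x+x')}\bigr]\, V_-(x')^{1/2}.
\end{equation*}
By the resolvent identity analogous to \eqref{4.14}, $-\kappa^2$ is a (discrete) eigenvalue of $\bsH_{0,+} - \bsV_-$ if and only if $1$ is an eigenvalue of $\bsB(\kappa)$; combining this equivalence with the operator monotonicity $\bsB(\kappa_1)\geq \bsB(\kappa_2)$ for $\kappa_1 \leq \kappa_2$ yields the Birman--Schwinger bound
\begin{equation*}
N(\bsH_{0,+} - \bsV_-) \leq \lim_{\kappa\downarrow 0}\, n(1;\bsB(\kappa)),
\end{equation*}
where $n(1;A)$ denotes the number (counting multiplicity) of eigenvalues of $A$ of size at least $1$.

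Finally, since $\bsB(\kappa)\geq 0$, one has $n(1;\bsB(\kappa)) \leq \tr_{L^2((0,\infty);\cH)}(\bsB(\kappa))$. The trace is computed from the diagonal of $B_\kappa$: using cyclicity of the trace in $\cH$ (so that the scalar factor $(1-e^{-2\kappa x})/(2\kappa)$ can be pulled out) together with the elementary inequality $(1-e^{-2\kappa x})/(2\kappa) \leq x$, one obtains
\begin{equation*}
\tr_{L^2((0,\infty);\cH)}(\bsB(\kappa)) = \int_0^\infty dx\, \frac{1-e^{-2\kappa x}}{2\kappa}\, \tr_\cH(V_-(x)) \leq \int_0^\infty dx\, x\,\tr_\cH(V_-(x)),
\end{equation*}
which is finite by Hypothesis \ref{hB.4}. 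The right-hand side is independent of $\kappa$, so passing to the limit $\kappa\downarrow 0$ completes the argument.

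The main obstacle is the rigorous implementation of the Birman--Schwinger principle in the present operator-valued setting at the threshold of the essential spectrum of $\bsH_{0,+}$. One must verify that $\bsB(\kappa)\in\cB_1(L^2((0,\infty);\cH))$ for each $\kappa>0$ (this follows from the semi-separable structure of $B_\kappa$ via Lemma \ref{lA.9}, since the exponential factors factor as products of $\cH$-valued functions in $x$ and $x'$), justify the eigenvalue equivalence between $-\kappa^2\in\sigma_d(\bsH_{0,+}-\bsV_-)$ and $1\in\sigma_d(\bsB(\kappa))$ in the operator-valued framework, and confirm that the monotone limit $\kappa\downarrow 0$ correctly captures all negative eigenvalues (including any accumulating at $0$). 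The trace computation itself is then straightforward from the explicit form of $B_\kappa$ and the bound $(1-e^{-t})/t \leq 1$.
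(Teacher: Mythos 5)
Your proposal is correct and follows essentially the same route as the paper: reduce to the attractive part $-V_-$, apply the Birman--Schwinger principle, bound the counting function by the trace of $\bsv_-(\bsH_{0,+}+\kappa^2\bsI_+)^{-1}\bsv_-$ computed from the explicit Dirichlet Green's function \eqref{4.8}, use $1-e^{-r}\leq r$, and let $\kappa\downarrow 0$. The only difference is that you spell out the intermediate steps (the estimate $n(1;\bsB(\kappa))\leq\tr(\bsB(\kappa))$, the trace-class verification, and the threshold limit) which the paper compresses into the single phrase ``the Birman--Schwinger principle then implies.''
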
 
\begin{proof}
As usual we may replace $V(\cdot)$ consistently by $V_-(\cdot)$. The Birman--Schwinger principle 
then implies
\begin{align}
N(- \lambda; \bsH_+) & \leq 
\tr_{L^2((0,\infty); \cH)} \big(\bsv_- (\bsH_{0,+} + \lambda \bsI_+)^{-1} \bsv_-\big)    \no \\
& = \f{1}{2 \lambda^{1/2}} \int_{(0,\infty)} dx \, \big[1 - e^{-2 \lambda^{1/2}x}\big] \tr_{\cH} (V_-(x))   \no \\
& \leq \int_{(0,\infty)} dx \, x \tr_{\cH}(V_-(x)).  
\end{align} 
Here, in obvious notation, $\bsv_-$ is defined as $\bsv$ in \eqref{B.12}, \eqref{B.17}, but with 
$V(\cdot)$ replaced by $V_-(\cdot)$, and we employed the well-known inequality 
$\big[1 - e^{-r}\big] \leq r$, $r \geq 0$ (cf., e.g., \cite[4.2.29, p.\ 70]{AS72}). To complete the 
proof it suffices to let $\lambda \downarrow 0$.  
\end{proof}

This proof was kindly communicated to us by A.\ Laptev \cite{La} in the context of 
matrix-valued potentials $V(\cdot)$. The proof is clearly of a canonical nature and independent 
of the dimension of $\cH$.   

\medskip

\noindent {\bf Acknowledgments.} We are indebted to Ari Laptev for communicating Theorem 
\ref{tB.5} to us. R.N. gratefully acknowledges support from an AMS--Simons Travel Grant. 

 
\end{document}